\documentclass[11pt]{article}
\usepackage{amsfonts,amssymb,amsmath,amsthm}
\usepackage{xcolor}

\numberwithin{equation}{section}

\usepackage{fullpage,graphicx}

\definecolor{darkgreen}{rgb}{0.01,0.75,0.24}

\newcommand{\EE}{\mathbb{E}}

\newcommand{\R}{\mathbb{R}}
\newcommand{\N}{\mathbb{N}}
\newcommand{\bbP}{\mathbb P}

 \newtheorem{theorem}{Theorem}[section]
  \newtheorem{proposition}[theorem]{Proposition}
  
  \newtheorem{lemma}[theorem]{Lemma}
\newtheorem{remark}[theorem]{Remark}

\DeclareMathOperator*{\argmin}{arg\,min}

\newcommand{\dhh}{d_{\mbox {\tiny{\rm Hell}}}}

\begin{document}
\title{Convergence of Gaussian Process Regression with Estimated Hyper-parameters and Applications in Bayesian Inverse Problems}
\author{Aretha L. Teckentrup$^1$}
\date{}
\maketitle

\noindent
$^1$ School of Mathematics, University of Edinburgh, James Clerk Maxwell Building, Edinburgh, EH9 3FD, UK. \texttt{a.teckentrup@ed.ac.uk}

\begin{abstract}
This work is concerned with the convergence of Gaussian process regression. A particular focus is on hierarchical Gaussian process regression, where hyper-parameters appearing in the mean and covariance structure of the Gaussian process emulator are a-priori unknown, and are learnt
from the data, along with the posterior mean and covariance. We work in the framework of empirical Bayes, where a point estimate of the hyper-parameters is computed, using the data,  and then used within the standard Gaussian process prior to posterior update. We provide a convergence analysis that (i) holds for a given, deterministic function $f$ to be emulated; and (ii) shows that convergence of Gaussian process regression is unaffected by the additional learning of hyper-parameters from data, and is guaranteed in a wide range of scenarios. As the primary motivation for the work is the use of Gaussian process
regression to approximate the data likelihood in Bayesian inverse problems, we provide a bound on the error introduced in the Bayesian posterior distribution in this context. 
\end{abstract}

{\em Keywords}: inverse problem, Bayesian inference, surrogate model, Gaussian process regression, posterior consistency, hierarchical, empirical Bayes'  \vspace{2ex}

{\em AMS 2020 subject classifications}: 62G08, 62J07, 65D15, 65D40, 65J22 

\section{Introduction}

Mathematical modelling and simulation are indispensable tools frequently used to inform decisions and assess risk. In practice, the parameters appearing in the models are often unknown, and have to be inferred from indirect observations. This leads to an {\em inverse problem}, where one infers the parameters of the model given incomplete, noisy observations of the model outputs. Adopting a Bayesian approach \cite{kaipio2005statistical,stuart10}, we incorporate our prior knowledge of the parameters into a probability distribution, referred to as the {\em prior distribution}, and obtain a more accurate representation of the parameters in the {\em posterior distribution}, which results from conditioning the prior distribution on the observations. 

The goal of simulations is typically to (i) sample from the posterior distribution, using methods such as Markov chain Monte Carlo (MCMC), and/or (ii) compute a point estimate of the parameters, such as the most likely value under the posterior distribution (known as the maximum a-posteriori (MAP) estimate). Both of these tasks quickly become computationally infeasible when the mathematical model involved is complex. In many applications, for example when the forward model is given by a partial differential equation, computing one instance of the forward model is computationally very expensive, and the sheer number of model evaluations required for the sampling and/or optimisation is prohibitively large.

{This drawback of fully Bayesian inference for complex models was recognised several decades ago in the statistics literature, and resulted in key papers which had a profound influence on methodology  \cite{sacks1989design,kennedy2001bayesian,o2006bayesian}. These papers advocated the use of a Gaussian process surrogate model (also called emulator) to approximate the solution of the governing equations, and in particular the data likelihood, at a much lower computational cost. 

The focus of this work is on the convergence analysis of Gaussian process surrogate models, in the case where the hyper-parameters in the distribution of the Gaussian process are a-priori unknown and inferred as part of the construction of the surrogate. This situation is of significant importance and interest, for, amongst others, the following reasons. Firstly, by correctly tuning the hyper-parameters, we will obtain a Gaussian process surrogate model that mimics closely the behaviour of the function we are approximating, resulting in a smaller error in the approximation. Secondly, the variance of the Gaussian process surrogate model is often used to represent the error in the approximation. However, for this interpretation to make sense, the hyper-parameters have to be chosen correctly. For example, the variance of the Gaussian process surrogate model can artificially be driven to zero by letting the marginal variance of the covariance kernel go to zero, but the error does not vanish in reality.

We adopt an empirical Bayes approach, also known as a plug-in approach, where we compute an estimate of the hyper-parameters, and plug this into the predictive equations for a Gaussian process surrogate model with known hyper-parameters. We present a convergence analysis of these hierarchical Gaussian process surrogate models, which shows that convergence of the mean and variance of the Gaussian process emulator is guaranteed under very mild assumptions on the estimated hyper-parameters. In particular, the convergence rates of the hierarchical Gaussian process emulator are the same as the convergence rates obtained for Gaussian process emulators with fixed, known values of the hyper-parameters, if the estimated hyper-parameters converge to the known values. 

As particular examples of covariance kernels used to construct the emulators, we consider Mat\'ern kernels and separable (or multiplicative/tensor-product) Mat\'ern kernels. As we will see in section \ref{sec:gp_error}, the type of covariance kernel one should employ depends on the structure and smoothness of the function being emulated. The use of Mat\'ern kernels corresponds to assuming a certain Sobolev smoothness, whereas the use of separable Mat\'ern kernels assumes a tensor-product Sobolev structure (also known as mixed dominating smoothness). 

The question of how the estimation of hyper-parameters influences the error in Gaussian process emulators is not new, and has been dealt with in the spatial statistics literature \cite{stein88, stein93,py01,da84,sss13,vv11,cs07}. However, these results are of a different nature to our new results presented in section \ref{sec:gp_error}, and to the type of error bounds needed in section \ref{sec:bip_error} to justify the use of Gaussian process emulators in Bayesian inverse problems (see also \cite{st18}). In particular, our results (i) give bounds for a fixed, deterministic function being emulated, rather than averaging over a certain distribution of functions, and (ii) do not require the hyper-parameters to be identifiable or the estimated hyper-parameters to converge.

{
A further distinction to previous studies, is that we do not require a notion of "true" values of the hyper-parameters. The customary (and often necessary) definition in spatial statistics (cf \cite{stein88, stein93,py01,da84}) is to choose the true parameter values such that the function being emulated is a sample of the corresponding Gaussian process. 
In our analysis, we do not require any such assumption on the function being emulated. True parameter values in our context would simply represent a good choice of hyper-parameters, and can be defined in any way that the user finds suitable (including the customary definition above).
Likewise, the estimated hyper-parameters can be defined in many suitable ways, e.g through maximum likelihood or maximum a-posteriori estimation (cf \cite{fslr19}) or cross-validation (cf \cite{wahba}). Our results are independent of how the hyper-parameters are estimated.

\subsection{Our Contributions}
In this paper, we make the following contributions to the analysis of Gaussian process regression:
\begin{enumerate}
\item We provide a convergence analysis of Gaussian process regression with estimated hyper-parameters, which shows convergence of the emulators to the true function as the number of design points tends to infinity.
\item We justify the use of hierarchical Gaussian process emulators to approximate the data likelihood in Bayesian inverse problems, by bounding the error introduced in the posterior distribution. Previous results, well known in the spatial statistics literature, are not sufficient for this purpose.
\end{enumerate}
}

\subsection{Paper Structure}

The paper is organised as follows. Section \ref{sec:gp} introduces hierarchical Gaussian process regression, and summarises relevant results from the spatial statistics literature. Section \ref{sec:gp_error} analyses the error in hierarchical Gaussian process regression in a wide range of scenarios. We set up the Bayesian inverse problem of interest in section \ref{sec:bip}, whereas Section \ref{sec:bip_error} then considers the use of hierarchical Gaussian process emulators to approximate the posterior distribution in the Bayesian inverse problem. Section \ref{sec:dis} provides a summary and discussion of the main results.

\section{Hierarchical Gaussian Process Regression}\label{sec:gp}
We want to use Gaussian process regression (also known as Gaussian process {\em emulation} or {\em kriging}) to derive a computationally cheaper approximation to a given function $f : U \rightarrow \R$, where $U \subseteq \R^{d_u}$ { is compact with Lipschitz boundary}.
We focus on the case where the hyper-parameters defining the Gaussian process emulator are unknown a-priori, and are inferred as part of the construction of the emulator. We denote these hyper-parameters by $\theta$, and treat them using an empirical Bayes approach.

\subsection{Set-up}
Let $f : U \rightarrow \R$ be an arbitrary function. To derive the Gaussian process emulator of $f$, we use a Bayesian procedure and assign a Gaussian process prior distribution to $f$:
\begin{equation}\label{eq:gp_prior}
f_0 | \theta \sim \text{GP}(m(\theta; \cdot), k(\theta;\cdot, \cdot)).
\end{equation}
To avoid confusion between the true function $f$ and its prior distribution, we have added the subscript zero in the above prior.
Here, $\theta \in R_\theta \subseteq \R^{d_\theta}$ are now hyper-parameters defining the mean function $m(\theta; \cdot) : U \rightarrow \R$ and the two-point covariance function $k(\theta;\cdot, \cdot) : U \times U \rightarrow \R$, assumed to be positive-definite for all $\theta \in S$, for any compact subset $S \subseteq R_\theta$. Particular examples of covariance kernels $k(\theta)$ are the Mat\'ern and separable Mat\'ern families discussed in sections \ref{ssec:gp_matern} and \ref{ssec:gp_sepmatern}. For the mean function $m(\theta)$, we can for example use polynomials, in which case the hyper-parameters are typically the unknown polynomial coefficients. We will write $\theta = \{\theta_\mathrm{mean}, \theta_\mathrm{cov}\}$ when we want to explicitly distinguish between the hyper-parameters appearing in the mean and covariance function, respectively. 

We further put a prior distribution $\mathbb P(\theta)$ on $\theta$, with Lebesgue density $p(\theta)$. The joint prior distribution on $(f, \theta)$ is then given by
\[
\bbP(f_0, \theta) = \bbP(f_0 | \theta) \; \bbP(\theta).
\]

Then, given data in the form of a set of distinct {design points} $D_N := \{u^n\}_{n=1}^N \subseteq U$, together with corresponding function values 
\[
f(D_N) := [f(u^1), \dots, f(u^N)] \in \R^N,
\]
we condition the prior distribution $\bbP(f_0, \theta)$ on the observed data $f(D_N)$ to obtain the posterior distribution
\[
\bbP(f_0, \theta | f(D_N)) = \bbP(f_0 | \theta, f(D_N)) \; \bbP(\theta | f(D_N)).
\]
The distribution $\bbP(f_0 | \theta, f(D_N))$ is again a Gaussian process, with explicitly known mean function $m_N^f(\theta; \cdot)$ and covariance kernel $k_N(\theta;\cdot, \cdot)$:
\begin{align}\label{eq:pred_eq}
m_N^f(\theta; u) &= m(\theta; u) + k(\theta; u, D_N)^T K(\theta; D_N)^{-1} (f(D_N) - m(\theta; D_N)), \\
\label{eq:pred_eq2} k_N(\theta; u,u') &= k(\theta; u,u') - k(\theta; u, D_N)^T K(\theta; D_N)^{-1} k(\theta; u',D_N),
\end{align}
where $k(\theta; u, D_N) = [k(\theta; u,u^1), \dots, k(\theta; u,u^N)] \in \R^{N}$, $K(\theta; D_N) \in \R^{N \times N}$ is the matrix with $ij^\mathrm{th}$ entry equal to $k(\theta; u^i,u^j)$ and $m(\theta; D_N) := [m(\theta; u^1), \dots, m(\theta; u^N)] \in \R^N$. These are the well-known formulae for Gaussian process emulation \cite{rasmussen_williams},
here adopting notation that will enable us to make use of the analysis of such emulators in \cite{st18}.
When we wish to make explicit the dependence on the prior mean $m$, we will denote the predictive mean in \eqref{eq:pred_eq} by $m_N^{f,m}(\theta)$. 

The marginal distribution
\[
\bbP(f_0 | f(D_N)) = \int_\theta \bbP(f_0, \theta | f(D_N)) \; \mathrm{d} \theta = \int_\theta \bbP(f_0 | \theta, f(D_N)) \; \bbP(\theta | f(D_N)) \; \mathrm{d} \theta
\]
is typically not available in closed form, since the integrals involved are intractable. In practice one therefore often uses a {\em plug-in} approach, also known as {\em empirical Bayes}. This consists of calculating an estimate $\widehat \theta_N$ of $\theta$ using the data $f(D_N)$, and then approximating 
\[
\bbP(f_0 | f(D_N)) \approx \bbP(f_0 | \widehat \theta_N , f(D_N)) = \text{GP}(m_N^f(\widehat \theta_N; \cdot), k_N(\widehat \theta_N;\cdot, \cdot)).
\]
This corresponds to approximating the distribution $\bbP(\theta | f(D_N))$ by a Dirac measure at $\theta = \widehat \theta_N$. 
For the remainder of this work, we will use
\begin{equation}\label{eq:gp}
f_N(\widehat \theta_N) \sim \text{GP}(m_N^f(\widehat \theta_N; \cdot), k_N(\widehat \theta_N;\cdot, \cdot))
\end{equation}
as a Gaussian process emulator of $f$. The process in \eqref{eq:gp} is also referred to as the {\em predictive process}, and we shall refer to $m_N^f(\widehat \theta_N; \cdot)$ and  $k_N(\widehat \theta_N;\cdot, \cdot)$ as the predictive mean and the predictive covariance, respectively.

{ In this work, we will focus on the convergence of the emulator $f_N(\widehat \theta_N)$ to the true function $f$, and how this is affected by the learning of the hyper-parameters $\theta$. Computing a good estimate $\widehat \theta_N$ of the hyper-parameters $\theta$ from the data $f(D_N)$ is an important and difficult question in practice. However, our results are independent of how this estimate is computed, and our results are also independent of whether the hyper-parameters are {\em identifiable}.

Following \cite{zhang04}, the random field model $g \sim \text{GP}(m(\theta; \cdot), k(\theta;\cdot, \cdot))$ is identifiable if it is theoretically possible to learn the true value of $\theta$ after obtaining an infinite number of observations of $g$ on $U$. In other words, the model $g \sim \text{GP}(m(\theta; \cdot), k(\theta;\cdot, \cdot))$ is identifiable if different values of $\theta$ give rise to orthogonal Gaussian measures.

By the Cameron-Martin Theorem (\cite{cm44}, see also \cite[Proposition 2.24]{daprato_zabczyk}) it follows in particular that models with polynomial mean functions $m(\theta_\mathrm{mean})$, where the parameters $\theta_\mathrm{mean}$ represent the coefficients or the degree of the polynomial, are in most cases not identifiable, since polynomials are typically contained in the reproducing kernel Hilbert space (a.k.a. Cameron Martin space) associated to $k$. In particular, this is the case for the Mat\'ern and separable Mat\'ern kernels presented below.}

\subsection{Mat\'ern Covariance Kernels}\label{ssec:gp_matern}
Covariance functions $k(\theta;\cdot, \cdot)$ frequently used in applications are the Mat\'ern covariance functions
\begin{equation}\label{eq:mat_cov}
k_\textrm{Mat}(\theta; u, u') = \frac{\sigma^2}{\Gamma(\nu) 2^{\nu-1}} \left(\frac{\|u-u'\|_2}{\lambda} \right)^\nu B_\nu\left(\frac{\|u-u'\|_2}{\lambda} \right),
\end{equation}
with hyper-parameters $\theta_\mathrm{cov} = \{\sigma^2, \lambda, \nu\} \in (0,\infty)^3$. Here, $\Gamma$ denotes the Gamma function, and $B_\nu$ denotes the modified Bessel function of the second kind \cite{lps_book}. The parameter $\sigma^2$ is usually referred to as the (marginal) variance, $\lambda$ as the correlation length and $\nu$ as the smoothness parameter. 
The expression for the Mat\'ern covariance kernel simplifies for particular choices of $\nu$. Notable examples include the exponential covariance kernel $\sigma^2 \exp(-\|u-u'\|_2/\lambda)$ with $\nu=1/2$, and the Gaussian covariance kernel $\sigma^2 \exp(-\|u-u'\|^2_2/\lambda^2)$ in the limit $\nu \rightarrow \infty$.

{ The identifiability of the Mat\'ern model has been studied in \cite{zz05,zhang04,anderes10}. While all parameters $\theta_\mathrm{cov}$ are identifiable for $d_u \geq 5$, only the quantities $\nu$ and $\sigma^2 \lambda^{-2 \nu}$ are identifiable when $d_u \leq 3$. The case $d_u=4$ remains open. To alleviate problems with identifiabilty for $d_u \leq 3$, the recent paper \cite{fslr19} discusses choices for the prior distribution on the hyper-parameters $\{\sigma^2, \lambda\}$, such that the MAP estimate $\widehat \theta_N^\mathrm{MAP}$ gives a good estimate of the true value of $\theta$. 

We would briefly like to point out here that the identifiability issues mentioned above are related to the fact that our parameter space $U$ is bounded, which means that we are dealing with {\em in-fill} asymptotics. If $U$ were unbounded, we would be dealing with {\em increasing domain} asymptotics, where all parameters are identifiable also when $d_u \leq 3$ \cite{zz05}. }

\subsection{Separable Mat\'ern Covariance Kernels}\label{ssec:gp_sepmatern}
As an alternative to the classical Mat\'ern covariance functions in the previous section, one can consider using their separable versions (also called multiplicative or tensor-product versions). These are obtained by taking the product of one-dimensional Mat\'ern covariance functions: 
\begin{equation}\label{eq:sepmat_cov}
k_\textrm{sepMat}(\theta; u, u') = \prod_{j=1}^{{d_u}} k_\textrm{Mat}(\theta_j; u_j, u_j').
\end{equation}
Since the marginal variances $\sigma_j^2$ only enter as multiplicative pre-factors, the hyper-parameters in this case are $\{\nu_j, \lambda_j\}_{j=1}^{{d_u}}$ and $\sigma^2 := \prod_{j=1}^{{d_u}} \sigma_j^2$, leading to  $\theta_\mathrm{cov} \in (0,\infty)^{2{{d_u}}+1}$.
A particular example is the separable exponential covariance kernel, which corresponds to $\nu_j \equiv 1/2$ and hence takes the form
\begin{equation}\label{eq:sepexp_cov}
k_\textrm{sepExp}(\theta; u,u') = \sigma^2 \exp\left( - \sum_{j=1}^{{d_u}} \frac{|u_j - u'_j|}{\lambda_j} \right).
\end{equation}

{ The separable versions of Mat\'ern kernels can have better properties than the classical Mat\'ern kernels in terms of identifiability. For example, \cite[Theorem 1]{ying93} shows that, provided ${{d_u}} > 1$, the model $g \sim \text{GP}(0, k_\textrm{sepExp}(\theta; \cdot, \cdot))$ on $U$ is identifiable.
The case of general separable Mat\'ern covariance kernels appears to be open, but related results in this direction can be found in \cite{loh05,ll00,daqing_thesis}.  Note that for ${{d_u}}=1$, the classical and separable Mat\'ern kernels coincide. In particular, for the stationary Ornstein-Uhlenbeck process given by ${{d_u}}=1$ and $\nu=1/2$, only the quantity $\sigma^2 \lambda^{-1}$ is identifiable on the bounded domain $U$.}

\section{Error Analysis of Hierarchical Gaussian Process Regression}\label{sec:gp_error}
In this section, we are concerned with the convergence of the hierarchical Gaussian process emulator $f_N$ to the function $f$. Although the main idea behind the error estimates in this section is related to those in \cite{stein88,stein93,py01}, we are here interested in error bounds which (i) do not assume that the function being emulated is a sample of a particular Gaussian process, (iii) 
bound the error for a given, deterministic function $f$, and (iii) are flexible with respect to the definition of the estimated hyper-parameters, so do not require any assumptions on identifiability of the hyper-parameters. Furthermore, the error analysis here will be performed in norms amenable to the use of the hierarchical Gaussian process emulators as surrogate models in Bayesian inverse problems, see section \ref{sec:bip_error} for details. For a short discussion of the prediction error typically studied in the spatial statistics literature, see section \ref{ssec:pred_err}.

Since the error analysis depends on various properties of the covariance kernel, such as the corresponding reproducing kernel Hilbert space (also known as the native space or Cameron-Martin space), we will consider two particular examples, namely the classical and the separable Mat\'ern covariance kernels already considered in sections \ref{ssec:gp_matern} and \ref{ssec:gp_sepmatern}. 

The definition of the estimated parameter values $\widehat \theta_N$ is open, and our analysis does not require any assumptions on how these estimates are computed. We do not require that the sequence $\{ \widehat \theta_N \}_{N \in \N}$ converges, neither do we require the parameters $\theta$ to be identifiable. We could for example use maximum likelihood or maximum a-posteriori estimators, choose $\widehat \theta_N$ to minimise the error $\|f - m_N^f(\widehat \theta_N)\|_{L^2(U)}$, or use a combination of different approaches for different hyper-parameters. 
Note, however, that we do not want to minimise the predictive variance $\|k_N^{1/2}(\widehat \theta_N)\|_{L^2(U)}$\footnote{By slight abuse of notation, we denote by $k_N(\widehat \theta_N)$ the function of one variable that gives the predictive variance at a point $u$,  $k_N(\widehat \theta_N; u,u)$. The quantity $\|k_N^{1/2}(\widehat \theta_N)\|_{L^2(U)} = \sqrt{\int_U k_N(\widehat \theta_N; u,u) \mathrm d u}$ is hence an averaged predictive variance.}, since this can be made arbitrarily small by letting $\widehat \sigma_N^2 \rightarrow 0$. We want to choose $\widehat \sigma_N^2$ such that $k_N^{1/2}(\widehat \theta_N)$ is a good representation of our remaining uncertainty about the function $f$, after observing the $f(D_N)$.}

We would like to quantify the performance of the predictive mean and covariance functions $m_N^f(\widehat \theta_N)$ and $k_N(\widehat \theta_N)$. 
In particular, in light of the error bounds required for the Bayesian posterior distribution in section \ref{sec:bip_error}, we are interested in the quantities $\|f - m_N^f(\widehat \theta_N)\|_{L^2(U)}$ and $\|k_N(\widehat \theta_N)\|_{L^2(U)}$.
We recall the following fundamental results, which hold for any kernel $k(\theta)$.

\begin{proposition}\label{prop:min_norm_interp} \cite[Section 6.2]{rasmussen_williams}, \cite[Theorem 1]{shs01} The function $m_N^{f,0}(\theta)$, given by \eqref{eq:pred_eq} with $m(\theta) \equiv 0$, is the minimal norm interpolant of $f$ in the native space corresponding to $k(\theta)$, { with interpolation points $D_N$}:
\[
m_N^{f,0}(\theta) = \argmin_{g \in H_{k(\theta)}(U) \; : \; g({ D_N}) = f( D_N)} \|g\|_{H_{k(\theta)}(U)}.
\]
In particular, $\| m_N^{f,0}(\theta) \|_{H_{k(\theta)}(U) } \leq \| f\|_{H_{k(\theta)}(U) }$.
\end{proposition}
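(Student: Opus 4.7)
The plan is to prove this by exploiting the reproducing property of the kernel and the orthogonal-decomposition characterisation of minimal-norm interpolation, which is a standard argument in RKHS theory. I would start by observing that, by construction, $m_N^{f,0}(\theta;\cdot)$ is a linear combination of the kernel sections
\[
m_N^{f,0}(\theta;u) = \sum_{n=1}^N \alpha_n \, k(\theta; u, u^n), \qquad \alpha := K(\theta;D_N)^{-1} f(D_N),
\]
and therefore lies in the finite-dimensional subspace $V_N := \mathrm{span}\{k(\theta;\cdot,u^n)\}_{n=1}^N \subseteq H_{k(\theta)}(U)$. A direct computation using the definition of $\alpha$ gives $m_N^{f,0}(\theta;u^i) = e_i^T K(\theta;D_N)\alpha = f(u^i)$ for each $i$, so $m_N^{f,0}(\theta)$ is indeed an interpolant of $f$ at $D_N$.

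Next, I would invoke the reproducing property $\langle g, k(\theta;\cdot,u)\rangle_{H_{k(\theta)}(U)} = g(u)$ for any $g \in H_{k(\theta)}(U)$ and $u \in U$. Consider the closed subspace $W_N := \{ g \in H_{k(\theta)}(U) : g(D_N) = 0\}$. By the reproducing property, $g \in W_N$ iff $\langle g, k(\theta;\cdot,u^n)\rangle = 0$ for all $n$, i.e.\ iff $g \perp V_N$. Thus $V_N$ and $W_N$ are orthogonal complements in $H_{k(\theta)}(U)$, and $V_N$ is precisely the orthogonal complement of $W_N$.

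Now let $g \in H_{k(\theta)}(U)$ be any interpolant with $g(D_N) = f(D_N)$. Write $g = m_N^{f,0}(\theta) + r$ where $r := g - m_N^{f,0}(\theta)$. Since both $g$ and $m_N^{f,0}(\theta)$ interpolate the same values, $r \in W_N$. By the orthogonality established above, and because $m_N^{f,0}(\theta) \in V_N$, Pythagoras gives
\[
\|g\|_{H_{k(\theta)}(U)}^2 = \|m_N^{f,0}(\theta)\|_{H_{k(\theta)}(U)}^2 + \|r\|_{H_{k(\theta)}(U)}^2 \geq \|m_N^{f,0}(\theta)\|_{H_{k(\theta)}(U)}^2,
\]
with equality only when $r=0$. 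This proves the minimal-norm characterisation. The final bound $\|m_N^{f,0}(\theta)\|_{H_{k(\theta)}(U)} \leq \|f\|_{H_{k(\theta)}(U)}$ then follows immediately by taking $g = f$, which is a valid competitor whenever $f \in H_{k(\theta)}(U)$ (and is vacuous otherwise, since the right-hand side is infinite).

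There is no real obstacle here; the only point requiring mild care is the orthogonal-decomposition step, specifically verifying that $V_N^\perp = W_N$ via the reproducing property. One small caveat worth flagging is the implicit assumption that the kernel matrix $K(\theta;D_N)$ is invertible, which is guaranteed by the positive-definiteness of $k(\theta;\cdot,\cdot)$ assumed in the set-up, together with the distinctness of the design points in $D_N$.
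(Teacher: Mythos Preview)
Your argument is correct and is the standard RKHS proof of this fact. Note that the paper does not supply its own proof of this proposition; it simply cites \cite[Section 6.2]{rasmussen_williams} and \cite[Theorem 1]{shs01}, where precisely the orthogonal-decomposition argument you give (reproducing property $\Rightarrow$ $V_N^\perp = W_N$ $\Rightarrow$ Pythagoras) is what underlies the result.
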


\begin{proposition}\label{prop:predvar_sup} \cite[Proposition 3.5]{st18} Suppose $k_N(\theta)$ is given by \eqref{eq:pred_eq2}. Then
\[
k_N(\theta; u,u)^{\frac{1}{2}} = \sup_{\|g\|_{H_{k(\theta)(U)}=1}} | g(u) - m^{g,0}_N(\theta; u)|,
\]
where $m^{g,0}_N(\theta)$ is given by \eqref{eq:pred_eq} with $m(\theta) \equiv 0$.
\end{proposition}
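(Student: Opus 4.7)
The plan is to identify, for each fixed $u \in U$ and $\theta$, the map $\ell_u : g \mapsto g(u) - m_N^{g,0}(\theta;u)$ as a bounded linear functional on the reproducing kernel Hilbert space $H_{k(\theta)}(U)$, and then to compute $\sup_{\|g\|=1}|\ell_u(g)|$ via its Riesz representer. The supremum of $|\ell_u|$ over the unit sphere will then equal the $H_{k(\theta)}(U)$-norm of that representer, which I expect to coincide exactly with $k_N(\theta;u,u)^{1/2}$.

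To carry this out, I would apply the reproducing property twice. On one hand, $g(u) = \langle g, k(\theta;\cdot,u)\rangle_{H_{k(\theta)}}$. On the other, using \eqref{eq:pred_eq} with $m \equiv 0$ we have $m_N^{g,0}(\theta;u) = k(\theta;u,D_N)^T K(\theta;D_N)^{-1} g(D_N)$, and since each entry $g(u^i) = \langle g, k(\theta;\cdot,u^i)\rangle_{H_{k(\theta)}}$, this is also an inner product with a finite combination of kernel sections. Writing $\alpha(u) := K(\theta;D_N)^{-1} k(\theta;u,D_N)$, one gets
\[
\ell_u(g) = \langle g, r_u \rangle_{H_{k(\theta)}}, \qquad r_u := k(\theta;\cdot,u) - \sum_{i=1}^N \alpha_i(u)\, k(\theta;\cdot,u^i),
\]
with $r_u \in H_{k(\theta)}(U)$ since it is a finite linear combination of kernel sections. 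Cauchy--Schwarz then gives $\sup_{\|g\|_{H_{k(\theta)}}=1} |\ell_u(g)| = \|r_u\|_{H_{k(\theta)}}$, with the supremum attained by $g = r_u/\|r_u\|_{H_{k(\theta)}}$ whenever $r_u \neq 0$.

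The last step is to recognise that $\|r_u\|^2_{H_{k(\theta)}} = k_N(\theta;u,u)$. Using the reproducing property once more and expanding,
\[
\|r_u\|^2_{H_{k(\theta)}} = k(\theta;u,u) - 2\,\alpha(u)^T k(\theta;u,D_N) + \alpha(u)^T K(\theta;D_N)\,\alpha(u),
\]
and substituting $\alpha(u) = K(\theta;D_N)^{-1} k(\theta;u,D_N)$ collapses the last two terms into $-k(\theta;u,D_N)^T K(\theta;D_N)^{-1} k(\theta;u,D_N)$, matching $k_N(\theta;u,u)$ by \eqref{eq:pred_eq2}. There is no substantive obstacle here; the only minor points are that invertibility of $K(\theta;D_N)$ (guaranteed by the positive-definiteness assumed on $k(\theta)$) is needed for $\alpha(u)$ and hence $r_u$ to be well defined, and that the degenerate case $u \in D_N$ simply gives $r_u \equiv 0$, so both sides of the claimed identity vanish.
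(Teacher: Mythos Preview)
Your proof is correct and is the standard RKHS argument for this identity. Note that the paper does not actually supply its own proof of this proposition; it is simply quoted from \cite[Proposition~3.5]{st18}, and the Riesz-representer computation you give is precisely the argument one would expect there.
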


\subsection{Mat\'ern Covariance Kernels}\label{ssec:gp_error_matern}
Suppose we use a Mat\'ern covariance kernel $k_\textrm{Mat}(\widehat \theta_N)$, defined in \eqref{eq:mat_cov}, to construct the hierarchical Gaussian process emulator $f_N$, defined in \eqref{eq:gp}.

Given the set of design points $D_N = \{u^n\}_{n=1}^N \subseteq U$, we define the {fill distance} $h_{{D_N},U}$, {separation radius} $q_{{D_N},U}$ and {mesh ratio} $\rho_{{D_N},U}$ by
\begin{equation}\label{eq:def_fill}
h_{{D_N},U} := \sup_{u \in U} \inf_{u^n \in U} \|u-u^n\|, \qquad q_{{D_N},U} := \frac{1}{2} \min_{i \neq j} \|u^j - u^i\|, \qquad \rho_{{D_N},U} := \frac{h_{{D_N},U}}{q_{{D_N},U}} \geq 1.
\end{equation}
The fill distance (also known as the maximin distance \cite{jmy90} or dispersion \cite{niederreiter}) is the maximum distance any point in $U$ can be from a design point in $D_N$, and the separation radius is half the smallest distance between any two distinct points in $D_N$. The three quantities above  provide measures of how uniformly the design points $D_N$ are distributed in $U$. 

The fill distance $h_{{D_N},U}$ and the separation radius $q_{{D_N},U}$ are decreasing functions of $N$, and these quantities will tend to zero as $N$ tends to infinity for space-filling designs. The best possible rate of convergence for the fill distance for any choice of $D_N$ is $h_{{D_N},U} \leq C N^{-1/{{d_u}}}$ (see e.g. \cite{niederreiter, rz15}). The separation radius can decrease at an arbitrarily fast rate. The mesh ratio $\rho_{{D_N},U}$, on the other hand, is a non-decreasing function of $N$.  Point sets $D_N$ for which $\rho_{{D_N},U}$ can be bounded uniformly in $N$, i.e. sets for which the fill distance and the separation radius decrease at the same rate with $N$, are called {quasi-uniform}. In general, however, the mesh ratio can be strictly increasing in $N$.

\subsubsection{Predictive Mean}
We first consider the predictive mean $m_N^f(\widehat \theta_N)$. The main result is given in Theorem \ref{thm:mean_conv_nu}. 
To prove explicit error bounds, recall the following characterisation of the native space (also known as reproducing kernel Hilbert space) of the Mat\'ern kernel.

\begin{proposition}\label{prop:native_matern} \cite[Corollary 10.48]{wendland} Let $U$ be a bounded Lipschitz domain, and let $k(\theta) = k_{\mathrm{Mat}}(\theta)$, with $\theta_\mathrm{cov} = \{\nu, \lambda, \sigma^2\} \subseteq S$, for some compact set $S \subseteq (0,\infty)^3$. Then the native space $H_{k_\mathrm{Mat}(\theta)}(U)$ is equal to the Sobolev space $H^{\nu+{{d_u}}/2}(U)$ as a vector space, and the native space norm and the Sobolev norm are equivalent. 
\end{proposition}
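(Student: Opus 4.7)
The plan is to reduce the identification of the native space to a Fourier-analytic characterisation of the Matérn kernel on the whole space $\R^{d_u}$, and then pass to the bounded domain $U$ via an extension/restriction argument, tracking all equivalence constants to verify uniformity over $\theta \in S$.

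First, I would work on $\R^{d_u}$. Since $k_{\mathrm{Mat}}(\theta;\cdot,\cdot)$ is translation invariant, writing $k_{\mathrm{Mat}}(\theta;u,u') = \Phi_\theta(u-u')$, the standard theory of native spaces for translation-invariant positive-definite kernels identifies $H_{k_{\mathrm{Mat}}(\theta)}(\R^{d_u})$ as the space of $L^2$ functions $g$ whose Fourier transform satisfies
\[
\|g\|_{H_{k_{\mathrm{Mat}}(\theta)}(\R^{d_u})}^2 = (2\pi)^{-d_u/2}\int_{\R^{d_u}} \frac{|\hat g(\xi)|^2}{\widehat{\Phi}_\theta(\xi)}\,\mathrm{d}\xi.
\]
The Fourier transform of the Matérn covariance is explicitly known: up to a $\nu$-dependent constant,
\[
\widehat{\Phi}_\theta(\xi) = C(\nu)\,\sigma^2 \lambda^{d_u}\bigl(1+\lambda^2\|\xi\|_2^2\bigr)^{-(\nu+d_u/2)}.
\]
Substituting this into the integral expression shows that the native space norm on $\R^{d_u}$ is equivalent to the Bessel-potential norm defining $H^{\nu+d_u/2}(\R^{d_u})$, with equivalence constants that are continuous functions of $(\nu,\lambda,\sigma^2)$ on $(0,\infty)^3$.

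Next, I would transfer the identification from $\R^{d_u}$ to the bounded Lipschitz domain $U$. The native space on $U$ is defined as the space of restrictions $g|_U$ of elements $g \in H_{k_{\mathrm{Mat}}(\theta)}(\R^{d_u})$, equipped with the quotient norm. Since $U$ has Lipschitz boundary, Stein's extension theorem provides a bounded linear extension operator $E\colon H^{\nu+d_u/2}(U) \to H^{\nu+d_u/2}(\R^{d_u})$. Combining this with the trivial continuity of restriction and the $\R^{d_u}$-level equivalence from the previous step, one gets two-sided bounds showing $H_{k_{\mathrm{Mat}}(\theta)}(U) = H^{\nu+d_u/2}(U)$ as sets and with equivalent norms.

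Finally, the compactness hypothesis on $S$ is what upgrades the pointwise equivalence into a uniform one. Because both the Fourier-side equivalence constants and the operator norm of Stein's extension (which depends on the smoothness index $\nu+d_u/2$) depend continuously on $\theta = (\nu,\lambda,\sigma^2)$ and $S \subset (0,\infty)^3$ is compact and bounded away from the boundary, one may take suprema/infima over $S$ to obtain constants independent of $\theta$. The main technical obstacle here is the $\theta$-dependence of the Sobolev index itself through $\nu$: one must check that the norms $\|\cdot\|_{H^{\nu+d_u/2}(U)}$ for different $\nu$ in a compact sub-interval of $(0,\infty)$ are mutually equivalent on some reference ambient space (e.g.\ via the Bessel-potential scale and interpolation), so that the uniformity over $S$ is meaningful. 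Everything else is routine bookkeeping of Fourier multipliers and extension constants.
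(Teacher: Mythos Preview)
The paper does not prove this proposition at all: it is quoted directly from Wendland's book (Corollary 10.48), and the subsequent Lemma~\ref{lem:norm_const} then tracks the equivalence constants explicitly via the same Fourier-analytic route you outline. Your first two steps --- the Fourier characterisation on $\R^{d_u}$ using the explicit Mat\'ern spectral density, followed by restriction/extension to the Lipschitz domain $U$ --- are exactly the standard argument and match what the paper does in the proof of Lemma~\ref{lem:norm_const}.

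However, your third step overreaches and contains a genuine confusion. The proposition as stated only asserts, for each fixed $\theta$, that $H_{k_{\mathrm{Mat}}(\theta)}(U) = H^{\nu+d_u/2}(U)$ with equivalent norms; the equivalence constants $C_{\mathrm{low}}(\theta_{\mathrm{cov}})$ and $C_{\mathrm{up}}(\theta_{\mathrm{cov}})$ displayed immediately after the proposition are allowed to depend on $\theta$. Uniformity of the ratio $C_{\mathrm{up}}/C_{\mathrm{low}}$ over compact $S$ is the separate content of Lemma~\ref{lem:norm_const}, not of this proposition. More importantly, your closing remark that one must check the norms $\|\cdot\|_{H^{\nu+d_u/2}(U)}$ for different $\nu$ are ``mutually equivalent on some reference ambient space'' is incorrect: Sobolev spaces with different smoothness indices are genuinely different spaces with non-equivalent norms, and no such mutual equivalence holds or is needed. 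What the paper actually controls uniformly is, for each $\theta$ separately, the ratio between the native-space norm and the Sobolev norm \emph{of the matching index} $\nu+d_u/2$; the compactness of $S$ is used only to bound $\max\{\lambda,\lambda^{-1}\}$, not to compare Sobolev spaces of different orders. Drop the last paragraph and your argument is fine for the proposition as stated.
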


There hence exist constants $C_\mathrm{low}(\theta_\mathrm{cov})$ and $C_\mathrm{up}(\theta_\mathrm{cov})$ such that for all $g \in H^{\nu+{{d_u}}/2}(U)$
\begin{equation}\label{eq:norm_eq}
C_\mathrm{low}(\theta_\mathrm{cov}) \|g\|_{H_{k_\textrm{Mat}(\theta)}(U)} \leq \|g\|_{H^{\nu+{{d_u}}/2}(U) } \leq C_\mathrm{up}(\theta_\mathrm{cov}) \|g\|_{H_{k_\textrm{Mat}(\theta)}(U)}.
\end{equation}

{
\begin{lemma}\label{lem:norm_const} For any compact set $S \subseteq (0,\infty)^3$, we have 
\[
\max_{\theta_\mathrm{cov} \in S} C_\mathrm{up}(\theta_\mathrm{cov}) C_\mathrm{low}(\theta_\mathrm{cov})^{-1} \leq \max_{\theta_\mathrm{cov} \in S} \max\{\lambda, \lambda^{-1}\} =: C_{\ref{lem:norm_const}} .
\]
\end{lemma}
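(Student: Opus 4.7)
The plan is to compare the two norms appearing in Proposition~\ref{prop:native_matern} via their Fourier characterisations. The spectral density of the stationary Mat\'ern kernel $k_\mathrm{Mat}(\theta)$ is, up to a multiplicative constant depending only on $\nu$ and $d_u$, equal to $\sigma^2\lambda^{d_u}(1+\lambda^2\|\xi\|^2)^{-s}$ with $s:=\nu+d_u/2$. Consequently, on $\mathbb{R}^{d_u}$ the Mat\'ern native space norm is equivalent to the weighted $L^2$-norm in frequency with weight proportional to $(1+\lambda^2\|\xi\|^2)^s/(\sigma^2\lambda^{d_u})$, whereas the Bessel potential Sobolev norm $\|\cdot\|_{H^s(\mathbb{R}^{d_u})}$ uses the weight $(1+\|\xi\|^2)^s$. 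To pass from $\mathbb{R}^{d_u}$ to $U$, I would invoke a bounded Sobolev extension operator $H^s(U)\to H^s(\mathbb{R}^{d_u})$ (available because $U$ is Lipschitz) together with the restriction characterisation of the Mat\'ern native space on $U$ underlying Proposition~\ref{prop:native_matern}.

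The quantitative step then reduces to the elementary pointwise inequality
\[
\min\{1,\lambda^{-2}\}\,(1+\lambda^2\|\xi\|^2)\;\le\;1+\|\xi\|^2\;\le\;\max\{1,\lambda^{-2}\}\,(1+\lambda^2\|\xi\|^2),
\]
which is valid for every $\xi\in\mathbb{R}^{d_u}$ because the ratio $(1+\|\xi\|^2)/(1+\lambda^2\|\xi\|^2)$ is monotone in $\|\xi\|^2$ and takes the values $1$ at $\xi=0$ and $\lambda^{-2}$ as $\|\xi\|\to\infty$. Raising this inequality to the power $s$ and integrating against $|\hat g(\xi)|^2$ yields a two-sided sandwich of the Sobolev Fourier weight by the Mat\'ern Fourier weight, with multiplicative constants that depend on $\sigma^2$, $\lambda^{d_u}$, $\nu$ and $d_u$, and whose asymmetry is controlled only through $\max\{\lambda,\lambda^{-1}\}$. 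Choosing $C_\mathrm{low}(\theta_\mathrm{cov})$ and $C_\mathrm{up}(\theta_\mathrm{cov})$ as geometric means of the resulting one-sided equivalence constants, one then reads off the claimed ratio.

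Uniformity over $\theta_\mathrm{cov}\in S$ follows from compactness of $S$ and continuity of $\lambda\mapsto\max\{\lambda,\lambda^{-1}\}$, which defines $C_{\ref{lem:norm_const}}$. The main obstacle will be bookkeeping: the pointwise weight comparison gets raised to the $s$-th power inside the Fourier integrals, so the $\nu$- and $d_u$-dependent prefactors coming from the normalisation of the spectral density must be carefully balanced across the two sides of \eqref{eq:norm_eq} in order to isolate the clean dependence $\max\{\lambda,\lambda^{-1}\}$ stated in the lemma, rather than the power $\max\{\lambda,\lambda^{-1}\}^{s}$ that falls out of the naive Fourier estimate.
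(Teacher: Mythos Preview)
Your approach is the same as the paper's: express both norms on $\mathbb{R}^{d_u}$ via Fourier weights, use the elementary two-sided bound on $(1+\|\xi\|^2)/(1+\lambda^2\|\xi\|^2)$, and then pass to the bounded Lipschitz domain $U$ by extension/restriction. The paper carries this out and records explicit constants $C_\mathrm{up}(\theta_\mathrm{cov})^2 = A(\nu,\sigma^2)\,\lambda^{d_u}\max\{1,\lambda^{-2}\}$ and $C_\mathrm{low}(\theta_\mathrm{cov})^{-2} = A(\nu,\sigma^2)^{-1}\lambda^{-d_u}\max\{1,\lambda^{2}\}$, whose product yields the stated $\max\{\lambda,\lambda^{-1}\}$.

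The obstacle you single out is genuine, and your instinct is right that the naive Fourier estimate produces $\max\{\lambda,\lambda^{-1}\}^{s}$ with $s=\nu+d_u/2$: raising the pointwise weight inequality to the $s$-th power gives $C_\mathrm{up}^2 = A\lambda^{d_u}\max\{1,\lambda^{-2}\}^{s}$ and $C_\mathrm{low}^{-2} = A^{-1}\lambda^{-d_u}\max\{1,\lambda^{2}\}^{s}$, hence $C_\mathrm{up}C_\mathrm{low}^{-1}=\max\{\lambda,\lambda^{-1}\}^{\nu+d_u/2}$. The paper simply writes the constants without the exponent $s$, which appears to be an oversight. Your proposed ``geometric mean'' device does not repair this: any admissible pair in \eqref{eq:norm_eq} must satisfy $C_\mathrm{up}\ge \sup_g \|g\|_{H^{s}}/\|g\|_{H_{k}}$ and $C_\mathrm{low}^{-1}\ge \sup_g \|g\|_{H_{k}}/\|g\|_{H^{s}}$, and on $\mathbb{R}^{d_u}$ these suprema are sharp (concentrate $\widehat g$ near $\xi=0$ or let $\|\xi\|\to\infty$), so the product is forced to be at least $\max\{\lambda,\lambda^{-1}\}^{s}$. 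The good news is that this discrepancy is irrelevant for every place the lemma is invoked later (Theorems~\ref{thm:mean_conv_nu} and~\ref{thm:var_conv_nu}): all that is used is a uniform bound on $C_\mathrm{up}C_\mathrm{low}^{-1}$ over the compact set $S$, and $\max_{\theta_\mathrm{cov}\in S}\max\{\lambda,\lambda^{-1}\}^{\nu+d_u/2}$ is finite because $\nu$ is bounded on $S$.
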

\begin{proof} First note that the conclusion of Proposition \ref{prop:native_matern} holds also on the domain $\R^{d_u}$ \cite[Corollary 10.13]{wendland}. By \cite[Theorem 10.12]{wendland}, for any $g \in H_{k_\mathrm{Mat}(\theta)}(\R^{d_u})$, we can express the native space norm as
\[
\|g\|_{H_{k_\mathrm{Mat}(\theta)}(\R^{d_u})}^2 = (2 \pi)^{-{d_u}/2} \int_{\R^{d_u}} \frac{|\widehat g(\omega)|^2}{\widehat k(\theta; \omega)} \mathrm d \omega,
\]
where $\widehat \cdot$ denotes the Fourier transform. Furthermore, the Mat\`ern covariance kernel has the Fourier transform \cite[Example 7.17]{lps_book}
\[
\widehat k(\theta; \omega) = \sigma^2 \frac{\Gamma(\nu + {d_u}/2)}{\Gamma(\nu) \pi^{{d_u}/2}} \frac{\lambda^{d_u}}{(1 + \lambda^2 \|\omega\|^2)^{\nu+{d_u}/2}}.
\]
With 
\[
\|g\|_{H^{\nu + d_u/2}(\R^{d_u})}^2 = (2 \pi)^{-{d_u}/2} \int_{\R^{d_u}} |\widehat g(\omega)|^2 (1 + \|\omega\|^2)^{\nu + {d_u}/2} \mathrm d \omega,
\]
it then follows that 
\begin{align*}
\|g\|_{H^{\nu + d_u/2}(\R^{d_u})}^2 &\leq  \frac{\sigma^2 \Gamma(\nu + {d_u}/2) \lambda^{d_u}}{\pi^{d_u/2}\Gamma(\nu)}  \max\{1,\lambda^{-2}\} \|g\|_{H_{k_\mathrm{Mat}(\theta)}(\R^{d_u})}^2 \\
&:= C_\mathrm{up}(\theta_\mathrm{cov})^2 \|g\|_{H_{k_\mathrm{Mat}(\theta)}(\R^{d_u})}^2, \\
\|g\|_{H_{k_\mathrm{Mat}(\theta)}(\R^{d_u})}^2 &\leq \frac{\pi^{d_u/2}\Gamma(\nu)} {\sigma^2 \Gamma(\nu + {d_u}/2) \lambda^{d_u}} \max\{1,\lambda^{2}\} \|g\|_{H^{\nu + d_u/2}(\R^{d_u})}^2  \\
&:= C_\mathrm{low}(\theta_\mathrm{cov})^{-2} \|g\|_{H_{k_\mathrm{Mat}(\theta)}(\R^{d_u})}^2.
\end{align*}
On the bounded Lipschitz domain $U$, the same inequalities then hold for the norms \\
$\|g\|_{H^{\nu + d_u/2}(U)} = \inf_{\mathcal E g \in H^{\nu + d_u/2}(\R^{d_u})} \|g\|_{H^{\nu + d_u/2}(\R^{d_u})}$ and  \\
$\|g\|_{H_{k_\mathrm{Mat}(\theta)}(U)} = \inf_{\mathcal E g \in H^{\nu + d_u/2}(\R^{d_u})} \|g\|_{H_{k_\mathrm{Mat}(\theta)}(\R^{d_u})}$, where $\mathcal E g$ denotes an extension of $g$.
The claim of the Lemma then follows, with $C_{\ref{lem:norm_const}} = \max_{\theta_\mathrm{cov} \in S} \max\{\lambda, \lambda^{-1}\}$.
\end{proof}
}

We then have the following result on the convergence of $m_N^f(\widehat \theta_N)$ to $f$ as $N \rightarrow \infty$. In particular, it shows that we obtain convergence in a wide range of scenarios, under very mild assumptions on the estimated hyper-parameters. If the estimated hyper-parameters converge,
we obtain the same convergence rate as in the case where all the hyper-parameters are fixed at the limiting value, cf \cite[Proposition 3.4]{st18}. Note that Theorem \ref{thm:mean_conv_nu} trivially also applies to the special case $\widehat \theta_N = \widehat \theta$, where a fixed value of the hyper-parameter is used. 

\begin{theorem}\label{thm:mean_conv_nu} {\em (Convergence in $N$ of $m_N^f(\widehat \theta_N)$)} Suppose we have a sequence of estimates $\{\widehat \theta_N\}_{N=1}^\infty \subseteq S$, for some compact set $S \subseteq R_\theta$. Assume
\begin{itemize}
\item[(a)] $U \subseteq \mathbb R^{{d_u}}$ is compact, with Lipschitz boundary, and satisfies an interior cone condition, 
\item[(b)] the native space $H_{k(\theta)}(U)$ is isomorphic to the Sobolev space $H^{\tau(\theta)}(U)$,
\item[(c)] $f \in H^{\tilde \tau}(U)$, for some $\tilde \tau = n + r$, with $n \in \mathbb N$, $n > {{d_u}}/2$ and $0 \leq r < 1$, 
\item[(d)] $m(\theta) \in  H^{\tilde \tau}(U)$ for all $\theta \in S$, 
\item[(e)] for some $N^* \in \N$, the quantities $\tau^{-} := \inf_{N \geq N^*}\tau(\widehat \theta_N)$ and $\tau^{+} := \sup_{N \geq N^*}\tau(\widehat \theta_N)$ satisfy $\tau_{-} = n' + r'$, with $n' \in \mathbb N$, $n' > {{d_u}}/2$ and $0 \leq r' < 1$.
\end{itemize}
Then there exists a constant $C$, which is independent of $f$, $m$ and $N$, such that for any $\beta \leq \tilde \tau$
\[
\| f - m_N^f(\widehat \theta_N)\|_{H^\beta(U)} \leq C h_{{D_N},U}^{\min\{\tilde \tau, \tau^{-}\} - \beta} \rho_{{D_N},U}^{\max\{\tau^{+} - \tilde \tau, 0\}} \Big( \|f \|_{H^{\tilde \tau}(U)} + \sup_{N \geq N^*} \|m(\widehat \theta_N) \|_{H^{\tilde \tau}(U)}\Big),
\]
provided  $N \geq N^*$ and $h_{{D_N},U} \leq h_0$.
\end{theorem}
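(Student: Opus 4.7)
The plan is to reduce the bound to a standard Narcowich--Ward--Wendland sampling inequality for scattered-data interpolation, after absorbing the prior mean into the residual.

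First, inspecting \eqref{eq:pred_eq} shows the identity $m_N^f(\widehat\theta_N;\cdot) - m(\widehat\theta_N;\cdot) = m_N^{g,0}(\widehat\theta_N;\cdot)$, where $g := f - m(\widehat\theta_N)$ lies in $H^{\tilde\tau}(U)$ by hypotheses (c) and (d), and $m_N^{g,0}(\widehat\theta_N)$ is the minimum-norm interpolant of $g$ at $D_N$ in the native space $H_{k(\widehat\theta_N)}(U)$, by Proposition \ref{prop:min_norm_interp}. It therefore suffices to bound $\|g - m_N^{g,0}(\widehat\theta_N)\|_{H^\beta(U)}$ and, at the end, apply the triangle inequality $\|g\|_{H^{\tilde\tau}(U)} \le \|f\|_{H^{\tilde\tau}(U)} + \|m(\widehat\theta_N)\|_{H^{\tilde\tau}(U)}$ followed by a supremum over $N \ge N^*$.

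Second, I would invoke the Sobolev error estimate for kernel interpolants: for $g \in H^{\tilde\tau}(U)$ interpolated in a native space equivalent to $H^{\tau}(U)$ with $\tau = \tau(\widehat\theta_N)$ and $h_{D_N,U} \le h_0$,
\[
\|g - m_N^{g,0}(\widehat\theta_N)\|_{H^\beta(U)} \le C\, h_{D_N,U}^{\min(\tilde\tau,\tau) - \beta} \rho_{D_N,U}^{\max(\tau - \tilde\tau, 0)} \|g\|_{H^{\tilde\tau}(U)}.
\]
Two regimes are handled separately. In the regime $\tilde\tau \ge \tau$, one combines a zeros-lemma/sampling inequality in $H^\beta$ (which requires the interior cone condition in (a), the regularity of $U$, and $\tau > d_u/2$ to embed into $C(U)$) with the native-space stability $\|m_N^{g,0}(\widehat\theta_N)\|_{H_{k(\widehat\theta_N)}(U)} \le \|g\|_{H_{k(\widehat\theta_N)}(U)}$ from Proposition \ref{prop:min_norm_interp}, transferred to Sobolev norms via \eqref{eq:norm_eq}. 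In the regime $\tilde\tau < \tau$, the extra factor $\rho_{D_N,U}^{\tau - \tilde\tau}$ arises from an inverse/Bernstein estimate for the interpolant, following the classical doubling argument of Narcowich--Ward--Wendland. Monotonicity then gives $\min(\tilde\tau, \tau(\widehat\theta_N)) \ge \min(\tilde\tau, \tau^-)$ and $\max(\tau(\widehat\theta_N) - \tilde\tau, 0) \le \max(\tau^+ - \tilde\tau, 0)$, producing the exponents appearing in the statement.

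Third, to conclude I must upgrade $C$ to a constant independent of $\widehat\theta_N \in S$. Here Lemma \ref{lem:norm_const} and the compactness of $S$ bound the norm-equivalence constants $C_\mathrm{up}(\theta_\mathrm{cov})$ and $C_\mathrm{low}(\theta_\mathrm{cov})^{-1}$ uniformly, while condition (e) locates $[\tau^-,\tau^+]$ as a closed subinterval of $(d_u/2,\infty)$, bounded away from the critical embedding exponent, so that the constants in the zeros lemma, Stein extension, and Bernstein inequality depend only on this compact interval and are thus uniformly bounded in $\widehat\theta_N$ with $N\ge N^*$.

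The main obstacle is exactly this uniformity: the sampling inequality is stated in the literature for a fixed kernel, so the dependence of its constant on $\theta$ must be tracked explicitly, using the Fourier representation of the Mat\'ern kernel already exploited in Lemma \ref{lem:norm_const} together with continuity of the Sobolev-embedding and extension constants in the smoothness index. A secondary subtlety is that $\tilde\tau$ need not be an integer, which forces the use of the splitting $\tilde\tau = n+r$ in (c) to invoke the fractional Sobolev zeros lemma; the analogous splitting in (e) plays the same role for the native-space exponent in the sub-native regime.
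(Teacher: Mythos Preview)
Your proposal is correct and follows essentially the same route as the paper's proof: reduction to the zero-mean interpolant, the Narcowich--Ward--Wendland sampling inequalities (Lemma~4.1 and Theorem~4.2 in \cite{nww06}) for the two regimes $\tau(\widehat\theta_N)\le\tilde\tau$ and $\tau(\widehat\theta_N)>\tilde\tau$, and uniformity in $\widehat\theta_N$ via Proposition~\ref{prop:min_norm_interp} and Lemma~\ref{lem:norm_const}. Two minor differences are worth noting: the paper writes $f-m_N^f(\widehat\theta_N)=(f-m_N^{f,0}(\widehat\theta_N))-(m(\widehat\theta_N)-m_N^{m,0}(\widehat\theta_N))$ and bounds the two pieces separately via the triangle inequality, whereas your bundling into $g=f-m(\widehat\theta_N)$ is equivalent and slightly more compact; and for the uniformity of the sampling-inequality and Bernstein constants in the smoothness exponent, the paper observes that these constants depend only on the integer part $\lfloor\tau\rfloor$ and hence take finitely many values on $[\tau^-,\tau^+]$, which is more direct than the continuity-in-$\tau$ argument you allude to.
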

\begin{proof} { First, we note that it follows from \eqref{eq:pred_eq} that $m_N^f(\widehat \theta_N) = m_N^{f,0}(\widehat \theta_N) + m(\widehat \theta_N) - m_N^{m,0}(\widehat \theta_N)$.
An application of the triangle inequality hence gives
\begin{align*}
\|f - m_N^f(\widehat \theta_N)\|_{H^{\beta}(U)} &= \|f - m_N^{f,0}(\widehat \theta_N) - m(\widehat \theta_N) + m_N^{m,0}(\widehat \theta_N)\|_{H^{\beta}(U)} \\
&\leq \|f - m_N^{f,0}(\widehat \theta_N)\|_{H^{\beta}(U)} + \|  m(\widehat \theta_N) - m_N^{m,0}(\widehat \theta_N)\|_{H^{\beta}(U)}.
\end{align*}
By assumption d), it follows from \cite[Lemma 4.1]{nww06} that for fixed $N \in \mathbb N$,
\[
\|  m(\widehat \theta_N) - m_N^{m,0}(\widehat \theta_N)\|_{H^{\beta}(U)} \leq C_{1}(\tilde \tau, \widehat \theta_N) h_{{D_N},U}^{\tilde \tau - \beta} \|m(\widehat \theta_N)\|_{H^{\tilde \tau}(U)},
\]
for some constant $C_{1}(\tilde \tau, \widehat \theta_N)$  independent of $h_{{D_N},U}$ and $m$, provided $h_{{D_N},U} \leq C_h(U) n^{-2}$. If $\tau(\widehat \theta_N) \leq \tilde \tau$, a similar estimate holds for $f$:
\[
\|  f - m_N^{f,0}(\widehat \theta_N)\|_{H^{\beta}(U)} \leq C_{1}(\tau(\widehat \theta_N), \widehat \theta_N) h_{{D_N},U}^{\tau(\widehat \theta_N) - \beta} \|f\|_{H^{\tilde \tau}(U)},
\]
provided $h_{{D_N},U} \leq C_h(U) \lfloor \tau(\widehat \theta_N) \rfloor^{-2}$, where we have used $\|f\|_{H^{\tau(\widehat \theta_N)}(U)} \leq \|f\|_{H^{\tilde \tau}(U)}$ (see e.g. the proof of Lemma \ref{lem:norm_const}).

If $\tau(\widehat \theta_N) > \tilde \tau$, \cite[Theorem 4.2]{nww06} gives
\[
\|  f - m_N^{f,0}(\widehat \theta_N)\|_{H^{\beta}(U)} \leq C_{2}(\tilde \tau, \widehat \theta_N) h_{{D_N},U}^{\tilde \tau - \beta} \rho_{{D_N},U}^{\tau(\widehat \theta_N) - \tilde \tau} \|f \|_{H^{\tilde \tau}(U)},
\]
for some constant $C_{2}(\tilde \tau, \widehat \theta_N)$ independent of $f$, $h_{{D_N},U}$ and $\rho_{{D_N},U}$, provided $h_{{D_N},U} \leq C_h(U) n^{-2}$. 

An inspection of the proofs of \cite[Lemma 4.1 and Theorem 4.2]{nww06} further gives the following. The constant $C_{1}(\tau', \theta)$ is of the form 
$C_{1}(\tau', \theta) = 2 C'(\tau')  C''(\theta)$, where 
\begin{itemize}
\item $C'(\tau')$ is the constant appearing in the sampling inequality \cite[Theorem 2.12]{nww05}. This constant depends only on the integer part of $\tau'$, and can hence only take a finite set of values for $\tau'$ in a compact set.
\item $C''(\theta)$ is such that $\|m_N^f(\theta) \|_{H^{\tau(\theta)}(U)} \leq C''(\theta) \|f \|_{H^{\tau(\theta)}(U)}$. Using Propositions \ref{prop:min_norm_interp} and \ref{prop:native_matern}, an appropriate choice for $C''(\theta)$ is hence $C_\mathrm{up}(\theta_\mathrm{cov}) C_\mathrm{low}(\theta_\mathrm{cov})^{-1}$, which is uniformly bounded on compact sets by Lemma \ref{lem:norm_const}.
\end{itemize}
Similarly, we have $C_{2}(\tau', \theta) = C'(\tau')  (C'''(\tau') + 2 C''(\theta))$, where $C'$ and $C''$ are as above, and $C'''(\tau')$ is the constant appearing in the Bernstein inequality \cite[Corollary 3.5]{nww06}.

The conclusion then follows, with $h_0 := C_h(U) \min_{\widehat \theta_N \in S'} \min \{\lfloor \tau_{+} \rfloor^{-2}, n^{-2} \}$ and 
\[
C = \max_{\widehat \theta_N \in S'} \max \{ C_{1}(\tilde \tau, \widehat \theta_N), C_{1}(\tau(\widehat \theta_N), \widehat \theta_N), C_{2}(\tilde \tau, \widehat \theta_N) \},
\]
where $S' = S \cap \{\theta : \lfloor \tau \rfloor > d_u/2\}$.
}
\end{proof}

Assumption (a) in Theorem \ref{thm:mean_conv_nu} is an assumption on the domain $U$ being sufficiently regular, containing no sharp corners or cusps, and is satisfied, for example, for the unit cube $U=[0,1]^{d_u}$. Assumption (b) reiterates that the native space of Mat\'ern kernels is a Sobolev space. Theorem \ref{thm:mean_conv_nu} applies in fact not just to Mat\'ern kernels, but to any kernel which has a Sobolev space as native space, including the compactly supported Wendland functions \cite{wendland}.  

Assumption (c) is an assumption on the regularity of the function $f$ being emulated. We point out here that this assumption is rather mild, and modulo some technicalities (cf Remark \ref{rem:tau}), {this assumption simply means that $f$ should be an element of a Sobolev space that is compactly embedded into the space of continuous functions}. We also point out here that Theorem \ref{thm:mean_conv_nu} does not require the function $f$ to be in the native space of any of the kernels $k(\widehat \theta_N)$. The smoothness of $f$, denoted by $\tilde \tau$, can be both greater or smaller than the estimated smoothness $\tau(\widehat \theta_N)$. The best possible convergence rates are obtained when the estimated smoothness matches the true smoothness of $f$ (cf Remark \ref{rem:nu}). Recall that $h_{{D_N},U}$ is decreasing in $N$, whereas $\rho_{{D_N},U}$ is either constant or increasing in $N$. If we are underestimating the smoothness of $f$, then $\tau_{-} < \tilde \tau$, and we do not achieve the best possible exponent in $h_{{D_N},U}$. If we are overestimating the smoothness of $f$, then $\tau_{+} > \tilde \tau$ and we obtain a positive power of $\rho_{{D_N},U}$.  See section \ref{sec:dis} for a further discussion on the optimality of the rates.

Assumption (d) ensures that the chosen mean $m$ has at least the same regularity as $f$. This can be relaxed, but less regularity in $m$ would lead to lower convergence rates in the error, so in practice, one should ensure that $m$ is sufficiently smooth.

The quantities $\tau^{-}$ and $\tau^{+}$ in assumption (e) can be thought of as $\liminf_{N \rightarrow \infty }\tau(\widehat \theta_N)$ and $\limsup_{N \rightarrow \infty }\tau(\widehat \theta_N)$, respectively. If $\lim_{N \rightarrow \infty} \tau(\widehat \theta_N)$ exists, then this can be substituted for both quantities. Assumption (e) is the only assumption we make on the estimated hyper-parameters, other than that $\{\widehat \theta_N\}_{N=1}^\infty \subseteq S$, for some compact set $S \subseteq R_\theta$. In particular, this means that the only assumptions required on $\widehat \lambda_N$ and $\widehat \sigma^2_N$ are that they are bounded away from zero and infinity. For the estimated smoothness $\widehat \nu_N$, we again essentially require $0 < \widehat \nu_N < \infty$, however, due some technical issues in the proof (cf Remark \ref{rem:tau}), we require a slightly larger lower bound on $\widehat \nu_N$.

{ The error bounds in Theorem \ref{thm:mean_conv_nu} can be translated into error bounds in terms of the number of design points $N$ for specific choices of point sets. For example, the uniform grid $\tilde D_N = \{ \frac{i}{N}\}_{i=1}^N$ with $N$ points in $U = [0,1]$ has fill distance and separation radius equal to $h_{\tilde D_N,U} = q_{\tilde D_N,U}= N^{-1}$. In higher dimensions, the Cartesian product of one-dimensional uniform grids $\tilde D_N$ with $N$ points in $U = [0,1]^{d_u}$ has fill distance $h_{\tilde D_N,U} = \sqrt{d_u} N^{-\frac{1}{d_u}}$ and separation radius $q_{\tilde D_N,U}= N^{-\frac{1}{d_u}}$. Hence uniform grids are quasi-uniform, with constant mesh ratio $\rho_{D_N,U} = \sqrt{d_u}$, and Theorem \ref{thm:mean_conv_nu} gives a convergence rate of $N^{ - \frac{\min\{\tilde \tau, \tau_{-}\}}{d_u}}$ in the $L^2$-norm (corresponding to $\beta=0$). 

Low-discrepancy point sets, such as the Halton sequence, Sobol nets and lattice rules (see e.g. \cite{niederreiter}), also have a small fill distance. The fill distance $h_{D_N,U}$ can be bounded in terms of the discrepancy $d_{D_N,U}$ as $h_{D_N,U} \leq d_{D_N,U}^{\frac{1}{d_u}}$ (see e.g. \cite[Theorem 6.6]{niederreiter}). Sequences such as the Halton sequence, for which $d_{D_N,U} \leq C N^{-1} (\log N)^{d_u}$, then have a fill distance $h_{D_N,U} \leq C N^{-\frac{1}{d_u}} \log N$, which up to the log factor decays at optimal rate. However, it is unclear whether these point sets are quasi-uniform. For further discussion on specific point sets and their fill distances, we refer the reader to \cite{wbg20} and the references therein.

For a given $f \in H^{\tilde \tau}(U)$, the fastest rate obtainable for $\| f - m_N^f(\widehat \theta_N)\|_{L^2(U)}$ is $N^{ - \frac{\tilde \tau}{d_u}}$. 
Given $\| f - m_N^f(\widehat \theta_N)\|_{L^2(U)} = C N^{ - \frac{\tilde \tau}{d_u}}$, the number of points needed to obtain an error $\| f - m_N^f(\widehat \theta_N)\|_{L^2(U)} = \varepsilon$ is $ N = C^{d_u/\tilde \tau} \varepsilon^{ - d_u/\tilde \tau}$, and the number of points required to achieve a given accuracy hence grows exponentially in the dimension.
}

\begin{remark}\label{rem:nu} {\em (Choice of $\nu$)} { Under the assumptions of Theorem \ref{thm:mean_conv_nu}, we have 
\[
\| f - m_N^f(\theta_0)\|_{H^\beta(U)} \leq C h_{{D_N},U}^{\min\{\tilde \tau, \tau(\theta_0)\} - \beta} \rho_{{D_N},U}^{\max\{\tau(\theta_0) - \tilde \tau, 0\}} \|f \|_{H^{\tilde \tau}(U)},
\]
for any $\theta_0 \in S$. The best possible convergence rate in $N$ of this bound is obtained when $\tilde \tau = \tau(\theta_0)$, i.e. when $f$ is in the reproducing kernel Hilbert space corresponding to $k(\theta_0)$: $f \in H^{\tau(\theta_0)}(U) = H_{k(\theta_0)}(U)$. 
This is different to defining $\theta_0$ such that $f$ is a sample of the Gaussian process $\textrm{GP}(m(\theta_0; \cdot),k(\theta_0; \cdot, \cdot))$, since samples of a Gaussian process are almost surely not in the corresponding reproducing kernel Hilbert space.} This point has also already been noted in \cite{sss13}.
\end{remark}

{\begin{remark} \label{rem:tau} {\em (Valid choice of $\tilde \tau$)} The Sobolev space $H^{\tilde \tau}(U)$ is a reproducing kernel Hilbert space for any $\tilde \tau > {{d_u}}/2$. The restriction on $\tilde \tau$ in assumption {(c)} of Theorem \ref{thm:mean_conv_nu} is hence slightly stronger than expected, requiring that the integer part of $\tilde \tau$ is greater than ${{d_u}}/2$. This is due to a technical detail in the proofs of \cite[Lemma 4.1 and Theorem 4.2]{nww06}. As noted in \cite{wbg20}, one can use \cite[Theorem 3.2]{adt12} instead of \cite{nww06}, and assumption (c) in Theorem \ref{thm:mean_conv_nu} can then be relaxed to the expected $f \in H^{\tilde \tau}(U)$, for some $\tilde \tau > {{d_u}}/2$. The rest of the proof remains identical. The same comment applies to $\tau_{-}$ in assumption {(e)} in Theorem \ref{thm:mean_conv_nu}, and assumptions (d) and (f) in Theorem \ref{thm:mean_conv_sep}.
\end{remark}}

\subsubsection{Predictive Variance}
Next, we investigate the predictive variance $k_N(\widehat \theta_N)$. An application of Proposition \ref{prop:predvar_sup} gives the following result on the convergence of $k_N(\widehat \theta_N)$ to $0$.

\begin{theorem}\label{thm:var_conv_nu} {\em (Convergence in $N$ of $k_N(\widehat \theta_N)$)} Let the assumptions of Theorem \ref{thm:mean_conv_nu} hold. Then there exists a constant $C$, independent of $N$, such that
\[
\|k_N^{1/2}(\widehat \theta_N)\|_{L^2(U)} \leq  C h_{{D_N},U}^{\min\{\tilde \tau, \tau_{-}\} - {{d_u}}/2 -\varepsilon} \rho_{{D_N},U}^{\max\{\tau_{+} - \tilde \tau,0\}},
\]
for any $N \geq N^*$, $h_{{D_N},U} \leq h_0$ and $\varepsilon > 0$.
\end{theorem}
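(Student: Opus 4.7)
The plan is to reduce the predictive variance to a supremum of interpolation errors via Proposition \ref{prop:predvar_sup}, and then to reuse the same sampling inequality that drives Theorem \ref{thm:mean_conv_nu}, applied to an arbitrary unit-norm element of the native space.

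First, I would fix $u \in U$ and invoke Proposition \ref{prop:predvar_sup} to write
\[
k_N(\widehat \theta_N; u, u)^{1/2} = \sup_{\|g\|_{H_{k(\widehat \theta_N)}(U)} = 1} |g(u) - m_N^{g,0}(\widehat \theta_N; u)|.
\]
Under assumption (b) together with Lemma \ref{lem:norm_const}, the native-space norm is equivalent to the $H^{\tau(\widehat \theta_N)}(U)$ norm with constants bounded uniformly on the compact set $S$. Hence, up to a uniform multiplicative constant, it suffices to bound $|g(u) - m_N^{g,0}(\widehat \theta_N; u)|$ over the unit ball of $H^{\tau(\widehat \theta_N)}(U)$.

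To convert an interpolation error estimate into a pointwise bound, I would use the Sobolev embedding $H^{d_u/2+\varepsilon}(U) \hookrightarrow L^\infty(U)$, valid for any $\varepsilon > 0$, and then apply the sampling inequality \cite[Lemma 4.1]{nww06} exactly as in the proof of Theorem \ref{thm:mean_conv_nu}, but now with $f$ replaced by $g$ and $\tilde \tau$ set equal to $\tau(\widehat \theta_N)$. Since the function's smoothness matches the kernel's smoothness, we are in the ``$\tau(\widehat \theta_N) \leq \tilde \tau$'' case, so no mesh-ratio factor appears and the bound reads
\[
\|g - m_N^{g,0}(\widehat \theta_N)\|_{H^{d_u/2+\varepsilon}(U)} \leq C \, h_{D_N,U}^{\tau(\widehat \theta_N) - d_u/2 - \varepsilon} \|g\|_{H^{\tau(\widehat \theta_N)}(U)},
\]
with $C$ independent of $N$ and of $g$, and bounded uniformly in $\widehat \theta_N \in S$ by the same compactness argument spelled out in the proof of Theorem \ref{thm:mean_conv_nu} (the constants from \cite[Lemma 4.1]{nww06} depend only on $\lfloor \tau \rfloor$, and the norm-equivalence constants are controlled by Lemma \ref{lem:norm_const}).

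Taking the supremum over $g$ yields the pointwise estimate $k_N^{1/2}(\widehat \theta_N; u, u) \leq C h_{D_N,U}^{\tau(\widehat \theta_N) - d_u/2 - \varepsilon}$ uniformly in $u$; integrating over $U$ produces the same bound on $\|k_N^{1/2}(\widehat \theta_N)\|_{L^2(U)}$ up to a factor $|U|^{1/2}$. To match the stated form, I would weaken this using $\tau(\widehat \theta_N) \geq \tau_- \geq \min\{\tilde \tau, \tau_-\}$ together with $h_{D_N,U} \leq h_0 < 1$, which gives $h_{D_N,U}^{\tau(\widehat \theta_N) - d_u/2 - \varepsilon} \leq h_{D_N,U}^{\min\{\tilde \tau, \tau_-\} - d_u/2 - \varepsilon}$; the factor $\rho_{D_N,U}^{\max\{\tau_+ - \tilde \tau, 0\}} \geq 1$ may be inserted at no cost. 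The main technical obstacle is the uniformity of the constants in $\widehat \theta_N \in S$ rather than any new estimate; this has already been worked out in the proof of Theorem \ref{thm:mean_conv_nu} and just needs to be invoked here, noting that assumption (e) ensures $d_u/2 + \varepsilon < \tau_-$ for $\varepsilon$ sufficiently small, so that the sampling inequality is applicable for every $\widehat \theta_N$ with $N \geq N^*$.
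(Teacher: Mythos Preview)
Your proposal is correct and follows the same overall architecture as the paper: Proposition~\ref{prop:predvar_sup} reduces $k_N^{1/2}$ to a supremum of interpolation errors, the Sobolev embedding $H^{d_u/2+\varepsilon}(U)\hookrightarrow L^\infty(U)$ converts the pointwise bound into a norm bound, and then the sampling inequality behind Theorem~\ref{thm:mean_conv_nu} finishes the job.

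There is, however, a meaningful difference in how you apply the sampling inequality. The paper invokes Theorem~\ref{thm:mean_conv_nu} with the \emph{original} smoothness $\tilde\tau$ of $f$, so that the stated exponents $\min\{\tilde\tau,\tau_-\}$ and $\max\{\tau_+-\tilde\tau,0\}$ appear directly, and then needs $\sup_{\|g\|_{H_{k(\widehat\theta_N)}}=1}\|g\|_{H^{\tilde\tau}(U)}<\infty$ to close the argument; this last step is delicate when $\tau(\widehat\theta_N)<\tilde\tau$, since $g$ need not lie in $H^{\tilde\tau}(U)$. You instead set the function smoothness equal to the kernel smoothness $\tau(\widehat\theta_N)$, land in the matched case with no mesh-ratio factor, obtain the sharper intermediate estimate $Ch_{D_N,U}^{\tau(\widehat\theta_N)-d_u/2-\varepsilon}$, and only then weaken to the stated form using $\tau(\widehat\theta_N)\ge\tau_-\ge\min\{\tilde\tau,\tau_-\}$ and $\rho_{D_N,U}\ge 1$. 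This sidesteps the regularity issue entirely and in fact shows that the $\rho_{D_N,U}$ factor in the statement is superfluous. One small point: your weakening step uses $h_{D_N,U}<1$, which is not stated explicitly; but since $h_{D_N,U}\le h_0$ and the discrepancy in exponents is bounded by $\tau_+-\min\{\tilde\tau,\tau_-\}$, any factor arising from $h_0\ge 1$ can be absorbed into the constant.
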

\begin{proof}
An application of Proposition \ref{prop:predvar_sup}, gives 
\begin{align*}
\|k_N^{1/2}(\widehat \theta_N)\|_{L^2(U)} &= \left(\int_U k_N(\widehat \theta_N; u,u) \mathrm{d}u \right)^{1/2}\\
&\leq C \sup_{u \in U} k_N(\widehat \theta_N; u,u)^{1/2} \\
&= C \sup_{u \in U} \sup_{\|g\|_{H_{k(\widehat \theta_N)}(U)=1}} | g(u) - m^g_N(\widehat \theta_N; u)| \\
&= C\sup_{\|g\|_{H_{k(\widehat \theta_N)}(U)=1}} \sup_{u \in U}  | g(u) - m^g_N(\widehat \theta_N; u)|.
\end{align*}
{ The Sobolev embedding theorem gives the compact embedding of $H^{d_u/2+\varepsilon}(U)$ into the space of bounded continuous functions (see e.g. \cite[Theorem 4.12, Part II]{adams}). Together with Theorem \ref{thm:mean_conv_nu}, this gives}
\begin{align*}
\|k_N^{1/2}(\widehat \theta_N)\|_{L^2(U)} &\leq  C' \sup_{\|g\|_{H_{k(\widehat \theta_N)}(U)=1}} \| g(u) - m^g_N(\widehat \theta_N; u)\|_{H^{d_u/2+\varepsilon}(U)} \\ 
&\leq C'' h_{{D_N},U}^{\min\{\tilde \tau, \tau_{-}\} - {{d_u}}/2 -\varepsilon} \rho_{{D_N},U}^{\max\{\tau_{+} - \tilde \tau,0\}} \sup_{\|g\|_{H_{k(\widehat \theta_N)}(U)=1}} \|g\|_{H^{\tilde \tau}(U)}.
\end{align*}
Finally, using Proposition \ref{prop:native_matern} gives $\|g\|_{H^{\tilde \tau}(U)} \leq C_\mathrm{up}(\widehat \theta_N) \|g\|_{H_{k(\widehat \theta_N)}(U)}$. The expression for $C_\mathrm{up}(\widehat \theta_N) $ derived in the proof of Lemma \ref{lem:norm_const}, and the compactness of $S$, then finish the proof.
\end{proof}

{
\begin{remark}{\em (Dependency on $\widehat \lambda_N, \widehat \sigma_N^2$ and $\widehat \nu_N$)} A careful inspection of the proofs of Theorems \ref{thm:mean_conv_nu} and \ref{thm:var_conv_nu} reveals more details about the dependency on the different hyper-parameters. The correlation length $\widehat \lambda_N$ enters only through the norm-equivalence constants $C_\mathrm{low} (\widehat \theta_N)$ and $C_\mathrm{up} (\widehat \theta_N)$, and the constants in Theorems \ref{thm:mean_conv_nu} and \ref{thm:var_conv_nu} are larger for extreme (i.e. very large or very small) values of $\widehat \lambda_N$. The constant in Theorem \ref{thm:mean_conv_nu} is in fact independent of $\widehat \sigma_N^2$, since it cancels out in the product $C_\mathrm{low} (\widehat \theta_N)^{-1} C_\mathrm{up} (\widehat \theta_N)$. This makes sense intuitively since the predictive mean $m_N^f(\widehat \theta_N)$ is independent of $\widehat \sigma_N^2$. In Theorem \ref{thm:var_conv_nu}, $\widehat \sigma_N^2$ enters linearly in the constant $C$ through $C_\mathrm{up} (\widehat \theta_N)$. Again, this makes sense intuitively since $\widehat \sigma_N^2$ enters as a multiplicative constant in the kernel $k(\widehat \theta_N)$. The dependency on $\widehat \nu_N$ is much more intricate, and influences the constants, as well as the convergence rates and valid choices for $h_0$.
\end{remark}
}

\subsection{Separable Mat\'ern Covariance Kernels}
Rather than the Mat\'ern kernels employed in the previous section, suppose now that we use a separable Mat\'ern covariance kernel $k_\textrm{sepMat}(\widehat \theta_N)$, as defined in \eqref{eq:sepmat_cov}, to define the Gaussian process emulator \eqref{eq:gp}. 
Due to the tensor product structure of the kernel $k_\textrm{sepMat}(\widehat \theta_N)$, we will assume that our parameter domain $U$ also has a tensor product structure $U = \prod_{j=1}^{{d_u}} U_j$, with $U_j \subset \R$ compact. 

\subsubsection{Predictive Mean}
We again start with the predictive mean $m_N^f(\widehat \theta_N)$. The main result in this section is Theorem \ref{thm:mean_conv_sep}.
We have the following equivalent of Proposition \ref{prop:native_matern}, characterising the native space of separable Mat\'ern covariance kernels on the tensor-product domain $U$.

\begin{proposition}\label{prop:native_sepmatern} \cite{ntw17, rw17} Suppose $U = \prod_{j=1}^{{d_u}} U_j$, with $U_j \subset \R$ bounded. Let $k(\theta) = k_\mathrm{sepMat}(\theta)$ be a separable Mat\'ern covariance kernel with $\theta_\mathrm{cov} \in S$, for some compact set $S \subseteq (0,\infty)^{2{{d_u}}+1}$. Then the native space $H_{k(\theta)}(U)$ is equal to the tensor product Sobolev space $H^{\{\nu_j+1/2\}}_{\otimes^{{d_u}}}(U) := \otimes_{j=1}^{{d_u}} H^{\nu_j + 1/2}(U_j)$ as a vector space, and the native space norm and the Sobolev norm are equivalent. 
\end{proposition}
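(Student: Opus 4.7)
The plan rests on two facts: the reproducing kernel Hilbert space of a tensor product kernel is the Hilbert space tensor product of the factor RKHSs, and the one-dimensional case of Proposition \ref{prop:native_matern} already identifies each factor with a Sobolev space. Combining these should give the conclusion, with uniformity over compact $S$ inherited factor by factor.

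First, I would establish the identification $H_{k_\mathrm{sepMat}(\theta)}(U) = \bigotimes_{j=1}^{d_u} H_{k_\mathrm{Mat}(\theta_j)}(U_j)$ as reproducing kernel Hilbert spaces. This is a general fact about product kernels (going back to Aronszajn): the tensor products $\bigotimes_{j=1}^{d_u} k_\mathrm{Mat}(\theta_j;\cdot,u_j^0)$ satisfy the reproducing property for $k_\mathrm{sepMat}(\theta;\cdot,u^0)$ in the canonical tensor inner product, and their span is dense, which together with completeness pins down the RKHS uniquely. The isometric identification is then immediate.

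Next, I would apply Proposition \ref{prop:native_matern} in the one-dimensional case ($d_u=1$) to each factor, which yields $H_{k_\mathrm{Mat}(\theta_j)}(U_j) = H^{\nu_j+1/2}(U_j)$ with equivalent norms. By definition of the tensor-product Sobolev space, $\bigotimes_{j=1}^{d_u} H^{\nu_j+1/2}(U_j) = H^{\{\nu_j+1/2\}}_{\otimes^{d_u}}(U)$. The per-factor norm equivalences lift through the tensor product: if $C_j^{-1}\|\cdot\|_{H^{\nu_j+1/2}(U_j)} \leq \|\cdot\|_{H_{k_\mathrm{Mat}(\theta_j)}(U_j)} \leq C_j \|\cdot\|_{H^{\nu_j+1/2}(U_j)}$ on each factor, then on elementary tensors the product norm is equivalent to the tensor RKHS norm with constant $\prod_j C_j$, and the inequality extends to the whole space by density and continuity. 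Uniformity of the overall equivalence constant over $\theta_\mathrm{cov} \in S$ then follows from Lemma \ref{lem:norm_const} applied factor-wise, using that the projections of $S$ onto each triple $(\sigma_j^2,\lambda_j,\nu_j)$ are compact.

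The main obstacle is the first step, namely the tensor product RKHS identification on a bounded tensor-product domain rather than on $\mathbb R^{d_u}$. On $\mathbb R^{d_u}$ the identification is essentially immediate because the Fourier transform of a product kernel factorises and the spectral representation of the RKHS norm used in the proof of Lemma \ref{lem:norm_const} directly splits. On bounded $U_j$ one must either argue via extension operators compatible with the product structure, or work abstractly with tensor products of Hilbert spaces and verify the reproducing property and completeness directly. The cited references \cite{ntw17, rw17} carry out precisely this identification on tensor-product bounded domains, so the most honest presentation is to outline the tensor-product argument and invoke those references for the technical details.
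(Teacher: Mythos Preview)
The paper does not supply its own proof of this proposition; it simply cites \cite{ntw17, rw17} and states the result. Your proposal therefore goes beyond what the paper does: you have outlined the standard argument --- Aronszajn's identification of the RKHS of a product kernel with the Hilbert tensor product of the factor RKHSs, combined with the one-dimensional instance of Proposition~\ref{prop:native_matern} --- which is exactly the route taken in the cited references. The sketch is correct, and your candid acknowledgement that the bounded-domain tensor identification is the technical crux, with the details deferred to \cite{ntw17, rw17}, is appropriate.

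One small remark: the proposition as stated asserts only the set-theoretic equality and norm equivalence for each fixed $\theta_\mathrm{cov} \in S$; it does not claim uniformity of the equivalence constants over $S$. Your final paragraph establishing uniformity via Lemma~\ref{lem:norm_const} is a useful addition for the paper's subsequent use of the result (and mirrors what the paper does immediately after the proposition by introducing $C_\mathrm{low}'(\theta_\mathrm{cov})$ and $C_\mathrm{up}'(\theta_\mathrm{cov})$), but strictly speaking it is not required to prove the proposition as written.
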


There hence exist constants $C_\mathrm{low}'(\theta_\mathrm{cov})$ and $C_\mathrm{up}'(\theta_\mathrm{cov})$ such that for all $g \in H^{\{ \nu_j+1/2\}}_{\otimes^{{d_u}}}(U)$
\[
C_\mathrm{low}'(\theta_\mathrm{cov}) \|g\|_{H_{k_\textrm{sepMat}(\theta)}(U)} \leq \|g\|_{H^{\{ \nu_j+1/2\}}_{\otimes^{{d_u}}}(U) } \leq C_\mathrm{up}'(\theta; U) \|g\|_{H_{k_\textrm{sepMat}(\theta_\mathrm{cov})}(U)}.
\]
We will write $\{\beta_j\} \leq \{\alpha_j\}$ if $\beta_j \leq \alpha_j$ for all $1 \leq j \leq {{d_u}}$.

For our further analysis, we now want to make use of the convergence results from \cite{ntw17}, related results are also found in \cite{rw17} and the references in \cite{ntw17}. For the design points $D_N$, we will use Smolyak sparse grids \cite{bg04}. For $1 \leq j \leq {{d_u}}$, we choose a sequence $X_j^{(i)} := \{x_{j,1}^{(i)}, \dots, x_{j,m_i}^{(i)}\}$, $i \in \N$, of nested sets of points in $U_j$. We then define the sparse grid $H(q,{{d_u}}) \subseteq U$ as the set of points
\begin{equation}\label{eq:def_sg}
H(q,{{d_u}}) := \bigcup_{|\mathbf i | = q} X_1^{(i_1)} \times \cdots X_{{d_u}}^{(i_{{d_u}})},
\end{equation}
where $|\mathbf i| = i_1 + \cdots i_{{d_u}}$ for a multi-index $\mathbf i \in \N^{{d_u}}$, and $q \geq {{d_u}}$. We denote by $N = N(q,{{d_u}})$ the number of points in the sparse grid $H(q,{{d_u}})$. 

We then have the following equivalent of Theorem \ref{thm:mean_conv_nu}, which is again concerned with the convergence as $N \rightarrow \infty$.

\begin{theorem}\label{thm:mean_conv_sep} {\em (Convergence in $N$ of $m_N^f(\widehat \theta_N)$)} Suppose we have a sequence of estimates $\{\widehat \theta_N\}_{N=1}^\infty \subseteq S$, for some compact set $S \subseteq R_\theta$. Assume
\begin{itemize}
\item[(a)] $U = \prod_{j=1}^{{d_u}} U_j$, with $U_j \subset \R$ compact,
\item[(b)] $D_N$ is chosen as the Smolyak sparse grid $H(q,{{d_u}})$, for some $q \geq {{d_u}}$, with 
\[
h_{X_j^{(i)},U_j} \leq C_1 m_i^{-r_h}, \qquad \text{and} \qquad \rho_{X_j^{(i)},U_j} \leq C_2 m_i^{r_\rho},
\]
for positive constants $C_1, C_2, r_h$ and $r_\rho$ independent of $m_i$ and $j$,
\item[(c)] the native space $H_{k(\theta)}(U)$ is isomorphic to a tensor product Sobolev space $H^{\{r_j(\theta)\}}_{\otimes^{{d_u}}}(U)$,
\item[(d)] $f \in H^{\{\tilde r_j\}}_{\otimes^{{d_u}}}(U)$, for some $\{\tilde r_j\}$ such that $\min_{1 \leq j \leq {{d_u}}} \tilde r_j \geq 1$, 
\item[(e)] $m(\theta) \in H^{\{\tilde r_j\}}_{\otimes^{{d_u}}}(U)$ for all $\theta \in S$,
\item[(f)] for some $N^* \in \N$, the quantities $r_{j, -} := \inf_{N \geq N^*} r_j(\widehat \theta_N)$ and $r_{j, +} := \sup_{N \geq N^*} r_j(\widehat \theta_N)$ satisfy $\min_{1 \leq j \leq {{d_u}}} r_{j, -} \geq 1$.
\end{itemize}
Then there exists a constant $C$, which is independent of $f$ and $N$, such that
\[
\| f - m_N^f(\widehat \theta_N)\|_{H^{\{\beta_j\}}_{\otimes^{{d_u}}}(U)} \leq C N^{-\alpha} (\log N)^{(1+\alpha')(d_u - 1)} \left( \|f \|_{H^{\{\tilde r_j\}}_{\otimes^{{d_u}}}(U)} + \sup_{N \geq N^*} \|m(\widehat \theta_N) \|_{H^{\{\tilde r_j\}}_{\otimes^{{d_u}}}(U)} \right),
\]
for any $\{\beta_j\}\leq  \{\tilde r_j\}$ and $N \geq N^*$, where 
\begin{align*}
\alpha &= \min_{1 \leq j \leq {{d_u}}} r_h (\min\{\tilde r_j, r_{j, -}\}  - \beta_j) - r_\rho \max\{r_{j, +} - \tilde r_j,0\}, \\
\text{and} \quad \alpha' &= \min_{1 \leq j \leq {{d_u}}} r_h (\min\{\tilde r_j, r_{j, +}\}  - \beta_j) - r_\rho \max\{r_{j, -} - \tilde r_j,0\},
\end{align*}
\end{theorem}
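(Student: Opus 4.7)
The plan is to follow the structure of the proof of Theorem~\ref{thm:mean_conv_nu}, but replace the sampling/Bernstein-inequality machinery of \cite{nww06} with the tensor-product sparse-grid interpolation bounds of \cite{ntw17}. As in the classical case, one writes
\[
m_N^f(\widehat \theta_N) = m_N^{f,0}(\widehat \theta_N) + m(\widehat \theta_N) - m_N^{m,0}(\widehat \theta_N),
\]
and applies the triangle inequality to reduce the task to bounding the zero-mean kernel interpolation errors $\|f - m_N^{f,0}(\widehat \theta_N)\|_{H^{\{\beta_j\}}_{\otimes^{d_u}}(U)}$ and $\|m(\widehat \theta_N) - m_N^{m,0}(\widehat \theta_N)\|_{H^{\{\beta_j\}}_{\otimes^{d_u}}(U)}$. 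Assumption (e) guarantees that the prior mean contribution is handled with the \emph{true} target smoothness $\{\tilde r_j\}$, whereas the $f$-term must be analysed in terms of the kernel smoothness $\{r_j(\widehat\theta_N)\}$, splitting coordinate by coordinate into the sub-cases $r_j(\widehat\theta_N)\le\tilde r_j$ and $r_j(\widehat\theta_N)>\tilde r_j$.

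Next, I would invoke the sparse-grid interpolation estimates from \cite{ntw17} on each of the one-dimensional factors $X_j^{(i)}\subset U_j$. For fixed $\widehat \theta_N$ these give, in each direction $j$, a factor of the form $h_{X_j^{(i)},U_j}^{\min\{\tilde r_j,\,r_j(\widehat\theta_N)\}-\beta_j}\rho_{X_j^{(i)},U_j}^{\max\{r_j(\widehat\theta_N)-\tilde r_j,\,0\}}$, mirroring the $d_u=1$ case of Theorem~\ref{thm:mean_conv_nu}. Substituting the hypotheses $h_{X_j^{(i)},U_j}\le C_1 m_i^{-r_h}$ and $\rho_{X_j^{(i)},U_j}\le C_2 m_i^{r_\rho}$, and combining with the Smolyak count $N(q,d_u)\sim 2^{q}q^{d_u-1}$ via the standard sparse-grid machinery, produces the claimed bound of the form $N^{-\alpha}(\log N)^{(1+\alpha')(d_u-1)}$: the polynomial rate $\alpha$ is dictated by the coordinate direction giving the slowest decay under the worst-case pairing (i.e.\ using $r_{j,-}$ inside the $h$-exponent and $r_{j,+}$ inside the $\rho$-exponent), and the logarithmic exponent is controlled by $\alpha'$, the best-case per-level rate, multiplied by the number of grid levels.

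To upgrade a pointwise-in-$\theta$ bound to one uniform over $\{\widehat\theta_N\}\subseteq S$, I would prove a tensor-product analogue of Lemma~\ref{lem:norm_const}: since the separable Matérn Fourier transform factorises as a product of one-dimensional Matérn Fourier transforms, the ratio $C_{\mathrm{up}}'(\theta_\mathrm{cov})/C_{\mathrm{low}}'(\theta_\mathrm{cov})$ is bounded on compact subsets of $(0,\infty)^{2d_u+1}$ by a product of terms of the form $\max\{\lambda_j,\lambda_j^{-1}\}$. The constants arising from \cite{ntw17} depend on $\widehat\theta_N$ only through these norm-equivalence factors and through sampling- and Bernstein-inequality constants that depend only on the integer parts of $r_j(\widehat\theta_N)$ and $\tilde r_j$; under assumption (f) these integer parts take only finitely many values, so a maximum over this finite collection yields a single constant $C$ absorbing $\|f\|_{H^{\{\tilde r_j\}}_{\otimes^{d_u}}(U)}$ and $\sup_{N\geq N^*}\|m(\widehat\theta_N)\|_{H^{\{\tilde r_j\}}_{\otimes^{d_u}}(U)}$.

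The main obstacle I expect is the direction-by-direction bookkeeping. In each coordinate $j$ one must identify whether $r_j(\widehat\theta_N)$ lies below or above $\tilde r_j$ and pick the correct interpolation bound, then correctly aggregate the $d_u$ per-direction rates into a single scalar rate with the right log exponent using the Smolyak counting estimate. This aggregation is precisely what produces the unusual-looking pair of exponents $\alpha$ and $\alpha'$: $\alpha$ collects the slowest coordinate-wise polynomial decay (and must therefore substitute $r_{j,-}$ in the $h$-slot and $r_{j,+}$ in the $\rho$-slot), while $\alpha'$ parametrises the per-level contribution appearing in the exponent of $\log N$ (and therefore substitutes $r_{j,+}$ and $r_{j,-}$ in the opposite roles). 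Care is required so that both limits from assumption (f) end up in the correct place; once this is done, the triangle-inequality reduction from the first step yields the stated bound.
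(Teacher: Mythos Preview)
Your proposal is correct and follows essentially the same route as the paper's proof: decompose via $m_N^f = m_N^{f,0} + m - m_N^{m,0}$, replace the one-dimensional interpolation bound underlying \cite[Theorem~3]{ntw17} with the ``misspecified'' version $\|g - m_n^{g,0}(\theta)\|_{H^\beta} \le C_1(\theta)\,h^{\min\{r,r(\theta)\}-\beta}\rho^{\max\{r(\theta)-r,0\}}$ (the $d_u=1$ case of Theorem~\ref{thm:mean_conv_nu}), run the Smolyak combination argument from \cite{ntw17} to obtain the $N^{-\alpha(\theta)}(\log N)^{(1+\alpha(\theta))(d_u-1)}$ bound at fixed $\theta$, and then take the worst case over $\widehat\theta_N\in S$ using compactness. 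Your explanation of why $\alpha$ and $\alpha'$ arise with opposite pairings of $r_{j,\pm}$ --- the polynomial factor is largest when $\alpha(\theta)$ is smallest, the logarithmic factor when $\alpha(\theta)$ is largest --- is exactly the right reading of the final ``as in Theorem~\ref{thm:mean_conv_nu}'' step.
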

\begin{proof} Theorem \ref{thm:mean_conv_sep} is based on a generalisation of \cite[Theorem 3]{ntw17}, to the case where the function $f$ is not necessarily in the native space of the kernel used to construct $m_N^f(\widehat \theta_N)$. The structure of the proof remains identical, and we only need to replace \cite[Proposition 4]{ntw17} with a corresponding result. All other assumptions required for \cite[Theorem 3]{ntw17} remain valid.

So let $U' \subseteq \R$ be bounded, and let $X_n := \{x_1, \dots, x_{n}\} \subseteq U'$ be a set of $n$ points in $U'$. For any $r \geq 1$ and $\beta \leq r$ , let us denote by $\mathrm{Id}: H^{r}(U') \rightarrow H^{\beta}(U')$ the identity operator, and by $S_{X_n}^\theta : H^{r}(U') \rightarrow H^{\beta}(U')$ the interpolation operator defined by $S_{X_n}^{\theta}(g) = m_n^{g,0}(\theta)$ (defined as in \eqref{eq:pred_eq} with $m\equiv 0$). As in the proof of Theorem \ref{thm:mean_conv_nu}, we have
\begin{align}\label{eq:p2}
\|\mathrm{Id} - S_{X_n}^\theta\|_{H^r(U') \rightarrow H^\beta(U')} &:= \sup_{\|g\|_{H^r(U')} = 1} \|g - m_n^{g,0}(\theta)\|_{H^\beta(U')} \nonumber \\
&\leq C_1(\theta) h_{X_n, U'}^{\min\{r, r(\theta)\}-\beta} \rho_{X_n, U'}^{\max\{r(\theta)-r,0\}},
\end{align}
for any $\beta \leq r$ and $h_{X_n, U'} \leq h_0(\theta)$, where $H^{r(\theta)}(U')$ is the reproducing kernel Hilbert space corresponding to $k(\theta)$ used to construct $m_N^{g,0}(\theta)$. The fill distance $h_{X_n, U'}$ and mesh ratio $\rho_{X_n, U'}$ are as defined in \eqref{eq:def_fill}, and the constants $C_1(\theta)$ and $h_0(\theta)$ are as in the proof of Theorem \ref{thm:mean_conv_nu}.

Following the proof of \cite[Theorem 3]{ntw17} and replacing \cite[Proposition 4]{ntw17} with \eqref{eq:p2}, gives
\begin{equation}\label{eq:p4}
\| g' - m_N^{g',0}(\theta)\|_{H^{\{\beta_j\}}_{\otimes^{{d_u}}}(U)} \leq C_2(\theta) N^{-\alpha(\theta)} (\log N)^{(1+\alpha(\theta))({{d_u}}-1)} \|g'\|_{H^{\{\tilde r_j\}}_{\otimes^{{d_u}}}(U)},
\end{equation}
for any $\{\beta_j\}\leq  \{\tilde r_j\}$ and $g' \in H^{\{\tilde r_j\}}_{\otimes^{{d_u}}}(U)$, where 
\[
\alpha(\theta) = \min_{1 \leq j \leq {{d_u}}} r_h (\min\{\tilde r_j, r_j(\theta)\}  - \beta_j) - r_\rho \max\{r_j(\theta) - \tilde r_j,0\}.
\]
By \cite[Remark 4]{ntw17}, the constant $C_2(\theta)$ depends on $\theta$ only through $\prod_{j=1}^{{d_u}} C_1(\theta)^{\alpha(\theta)}$. 

To finish the proof, we use the triangle inequality and the equality $m_N^f(\widehat \theta_N) = m_N^{f,0}(\widehat \theta_N) + m(\widehat \theta_N) - m_N^{m,0}(\widehat \theta_N)$, and apply \eqref{eq:p4}:
\begin{align*}
\| f - m_N^{f}(\widehat \theta_N)\|_{H^{\{\beta_j\}}_{\otimes^{{d_u}}}(U)} &\leq \| f - m_N^{f,0}(\widehat \theta_N)\|_{H^{\{\beta_j\}}_{\otimes^{{d_u}}}(U)} + \| m(\widehat \theta_N) - m_N^{m,0}(\widehat \theta_N)\|_{H^{\{\beta_j\}}_{\otimes^{{d_u}}}(U)} \\
&\hspace{-5ex}\leq C_2(\widehat \theta_N) N^{-\alpha(\widehat \theta_N)} (\log N)^{(1+\alpha(\widehat \theta_N))({{d_u}}-1)} \left( \|f\|_{H^{\{\tilde r_j\}}_{\otimes^{{d_u}}}(U)} + \| m(\widehat \theta_N)\|_{H^{\{\tilde r_j\}}_{\otimes^{{d_u}}}(U)} \right).
\end{align*}
The claim then follows as in Theorem \ref{thm:mean_conv_nu}.
\end{proof}

Many of the same comments apply as to Theorem \ref{thm:mean_conv_nu}. Assumption (a) means that the domain $U$ is of tensor-product structure, which is natural when using tensor-product kernels, and is satisfied, for example, for the unit cube $U = [0,1]^{d_u}$.

Assumption (b) is a specific choice of design points $D_N$, and in contrast to Theorem \ref{thm:mean_conv_nu}, the choice of design points $D_N$ as a sparse grid is explicitly used in the proof and is crucial for obtaining the error bound. The values of $r_h$ and $r_\rho$ will depend on the particular choice of one-dimensional point sets.
A particular choice of one-dimensional nested point sets often used in sparse grids are the Clenshaw-Curtis point sets $X_{CC}^{(i)}$, defined on $[-1,1]$ by $m_i = 2^{(i-1)}+1$, $X_{CC}^{(1)} = \{0\}$ and 
\begin{equation*}\label{eq:def_cc}
x_{j,n}^{(i)} = - \cos\left( \frac{\pi(n-1)}{m_i-1} \right), \qquad 1 \leq n \leq m_i.
\end{equation*}
General intervals $[a,b]$ can be dealt with through a linear transformation.
For this particular point set, we can use the Lipschitz continuity of the cosine function, together with Kober's inequality $1-\frac{x^2}{2} < \cos(x) < 1-\frac{4 x^2}{\pi^2}$ for $x \in [0, \frac{\pi}{2}]$ \cite{sandor13}, to show that $h_{X_{CC}^{(i)}, U_j} \leq C_1 m_i^{-1}$ and $\rho_{X_{CC}^{(i)}, U_j} \leq C_2 m_i$. This shows that assumption (b) is satisfied for Clenshaw-Curtis point sets, with $r_h = r_\rho=1$. { Note that Clenshaw-Curtis points are known to cluster around the boundaries, so are not quasi-uniform (as evidenced by $r_\rho=1$, which is sharp due Kober's inequality.) Alternatively, the one-dimensional point sets can be chosen as the uniform grids shown to be quasi-uniform in section \ref{ssec:gp_error_matern}, for which $r_h =1$ and $r_\rho = 0$.}

Assumption (c) reiterates that the native space of the separable Mat\'ern kernel is a tensor-product Sobolev space, and Theorem \ref{thm:mean_conv_sep} applies to any kernel with such a native space.

Assumption (d) is a regularity assumption on $f$, and again roughly corresponds to the function $f$ { being in a Sobolev space of mixed dominating smoothness that is compactly embedded into the space of continuous functions}. We would ideally have the restriction $\min_{1 \leq j \leq {{d_u}}} \tilde r_j > 1/2$; however, we need a slightly stronger restriction due to some technicalities in the proof (cf Remark \ref{rem:tau}). 
Theorem \ref{thm:mean_conv_sep} does not require the function $f$ to be in the native space of any of the kernels $k(\widehat \theta_N)$. {The fastest convergence rates are again obtained when the estimated smoothness matches the true smoothness of $f$, see section \ref{sec:dis} for a discussion on the optimality of the results.}

Assumption (e) ensures that the mean $m$ is at least as smooth as the function $f$, but this can again be relaxed. The assumptions on the estimated hyper-parameters $\{\nu_j, \lambda_j\}_{j=1}^{{d_u}}$ and $\sigma^2$ are again very mild. $\{\lambda_j\}_{j=1}^{{d_u}}$ and $\sigma^2$ are simply required to be bounded away from zero and infinity, and we require only a slightly larger lower bound on $\{\nu_j\}_{j=1}^{{d_u}}$ (cf assumption (f)).

{ Explicit convergence rates can again be obtained for specific choices of the design points. For sparse grids based on nested one-dimensional uniform grids, we obtain for $\beta_j \equiv 0$ the values $\alpha = \min_{1 \leq j \leq {{d_u}}} \min\{\tilde r_j, r_{j, -}\}$ and $\alpha' = \min_{1 \leq j \leq {{d_u}}} \min\{\tilde r_j, r_{j, +}\}$, giving the error estimate $\|f-m_N^f(\widehat \theta_N)\|_{L^2(U)} \leq C N^{- \min_{1 \leq j \leq {{d_u}}} \min\{\tilde r_j, r_{j, -}\}} (\log N)^{(1+\alpha')(d_u-1)}$. Note in particular that the dimension $d_u$ enters only in the log factor.

The set-up in Theorem \ref{thm:mean_conv_sep} is much more restrictive than that of Theorem \ref{thm:mean_conv_nu}. Firstly, the design points need to be chosen as a sparse grid based on nested one-dimensional point sets. This limits the admissible choices of number of points $N$. The choice $m_1=1$ ensures that the growth of $N$, as a function of level $q$ and dimension $d_u$, is as slow as possible. Furthermore, it can be shown that $N$ grows at most polynomially in $d_u$ (although it grows exponentially in the level $q$), see e.g. \cite[Lemma 3.9]{ntw08}. 

Secondly, the set of functions which satisfy the regularity assumptions in Theorem \ref{thm:mean_conv_sep} is a strict subset of those which satisfy the regularity assumptions in Theorem \ref{thm:mean_conv_nu}. The so-called mixed regularity of $f$ assumed here is crucial to obtaining the error bound in Theorem \ref{thm:mean_conv_sep}. Although mixed regularity is quite a strong assumption, it is fulfilled in many important applications such as parametric partial differential equations \cite{cds12,ntw08}. Also note that this function class is very different to those usually considered in the non-parametric regression literature (see e.g. \cite{vv11,ksvv16}), which correspond to the ones in Theorem \ref{thm:mean_conv_nu}.

The convergence rate in $N$ is (up to logarithmic factors) independent of the dimension $d_u$, and can in high dimensions be much larger than the convergence rate obtained in Theorem \ref{thm:mean_conv_nu}. For illustrative purposes, let us look at the example where the smoothness of $f$ is the same in every dimension, i.e. $\tilde r_j = \tilde r$. The fastest rate of convergence is obtained for example with sparse grids based on uniform one-dimensional grids and correctly estimated smoothness $r(\widehat \theta_N) = \tilde r$, in which case the convergence rate in the $L^2(U)$-norm is $\| f - m_N^f(\widehat \theta_N)\|_{L^2(U)} \leq C N^{- \tilde r} (\log N)^{(1+\tilde r)(d_u-1)}$. This means that to get an error $\| f - m_N^f(\widehat \theta_N)\|_{L^2(U)} = \varepsilon$, it suffices to choose $N = c \, \varepsilon^{1/\tilde r} |\log \varepsilon|^{(1+1/\tilde r)(d_u-1)}$ (cf \cite[Theorem 3]{ntw17}). In stark contrast with the setting of Theorem \ref{thm:mean_conv_nu}, the number of function evaluations required to achieve a given accuracy no longer grows exponentially with dimension $d_u$.}

\subsubsection{Predictive Variance}
Next, we investigate the predictive variance $k_N(\widehat \theta_N)$, proving the convergence of $k_N(\widehat \theta_N)$ to $0$ as $N \rightarrow \infty$.

\begin{theorem}\label{thm:var_conv_sep} {\em (Convergence in $N$ of $k_N(\widehat \theta_N)$)} Let the assumptions of Theorem \ref{thm:mean_conv_sep} hold. Then there exists a constant $C$, independent of $N$, such that
\[
\|k_N^{1/2}(\widehat \theta_N)\|_{L^2(U)} \leq  C N^{-\alpha} |\log N|^{(1+\alpha')({{d_u}}-1)} 
\]
for any $N \geq N^*$ and $\varepsilon > 0$, where 
\begin{align*}
\alpha &= \min_{1 \leq j \leq {{d_u}}} r_h (\min\{\tilde r_j, r_{j, -}\}  - 1/2 - \varepsilon) - r_\rho \max\{r_{j, +} - \tilde r_j,0\}, \\
\text{and} \quad \alpha' &= \min_{1 \leq j \leq {{d_u}}} r_h (\min\{\tilde r_j, r_{j, +}\}  - 1/2 - \varepsilon) - r_\rho \max\{r_{j, -} - \tilde r_j,0\}.
\end{align*}
\end{theorem}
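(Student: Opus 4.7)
The plan is to mirror the proof of Theorem~\ref{thm:var_conv_nu} almost verbatim, replacing the classical Matérn ingredients with their separable/tensor-product counterparts. As a first step, I would apply Proposition~\ref{prop:predvar_sup} pointwise and then pass the outer supremum inside the integral, obtaining
\[
\|k_N^{1/2}(\widehat \theta_N)\|_{L^2(U)} \;\leq\; C\,\sup_{u\in U}\sup_{\|g\|_{H_{k(\widehat\theta_N)}(U)}=1}\bigl|\,g(u)-m_N^{g,0}(\widehat\theta_N;u)\,\bigr|
\;=\; C\,\sup_{\|g\|_{H_{k(\widehat\theta_N)}(U)}=1}\bigl\|g-m_N^{g,0}(\widehat\theta_N)\bigr\|_{L^\infty(U)}.
\]
Here $C$ comes from the boundedness of $U$ and the fact that the $L^2$-norm of a nonnegative function is controlled by a constant times its sup-norm. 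This reduces the problem to a uniform interpolation-error bound in $L^\infty(U)$ over the unit ball of the native space.

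Next I would use the tensor-product Sobolev embedding to move from the sup-norm to a tensor-product Sobolev norm. Specifically, for any $\varepsilon>0$, the space $H^{\{1/2+\varepsilon\}}_{\otimes^{d_u}}(U)$ is continuously (in fact compactly) embedded into the space of bounded continuous functions on $U$ (this can be obtained by iterating the one-dimensional embedding $H^{1/2+\varepsilon}(U_j) \hookrightarrow C(\overline U_j)$ across the tensor factors). Hence
\[
\bigl\|g-m_N^{g,0}(\widehat\theta_N)\bigr\|_{L^\infty(U)} \;\leq\; C'\,\bigl\|g-m_N^{g,0}(\widehat\theta_N)\bigr\|_{H^{\{1/2+\varepsilon\}}_{\otimes^{d_u}}(U)}.
\]

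I would then apply Theorem~\ref{thm:mean_conv_sep} with the choice $\beta_j \equiv 1/2+\varepsilon$ and with $f$ replaced by $g$ (and with $m\equiv 0$, so the mean-function contribution is absent), which is admissible because $\min_j \tilde r_j \geq 1 > 1/2+\varepsilon$ for $\varepsilon$ small. This gives
\[
\bigl\|g-m_N^{g,0}(\widehat\theta_N)\bigr\|_{H^{\{1/2+\varepsilon\}}_{\otimes^{d_u}}(U)} \;\leq\; C''\, N^{-\alpha}(\log N)^{(1+\alpha')(d_u-1)}\,\|g\|_{H^{\{\tilde r_j\}}_{\otimes^{d_u}}(U)},
\]
with $\alpha,\alpha'$ exactly as in the statement (since the $\beta_j$ enter the exponent as $\min\{\tilde r_j,r_{j,\pm}\}-\beta_j$ with $\beta_j=1/2+\varepsilon$). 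Finally, by Proposition~\ref{prop:native_sepmatern} and the norm-equivalence constant $C'_{\mathrm{up}}(\widehat\theta_N)$, which is uniformly bounded on the compact set $S$ (this is the tensor-product analogue of Lemma~\ref{lem:norm_const}), we get $\|g\|_{H^{\{\tilde r_j\}}_{\otimes^{d_u}}(U)} \leq C'''\|g\|_{H_{k(\widehat\theta_N)}(U)} = C'''$ uniformly over the unit ball. Taking the supremum over $g$ and absorbing all $\widehat\theta_N$-dependent constants into a single $C$ by compactness of $S$ yields the claim.

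The only delicate point, and the one I expect to be the main technical obstacle, is verifying the uniform bound on the norm-equivalence constant $C'_{\mathrm{up}}(\theta_{\mathrm{cov}})$ from Proposition~\ref{prop:native_sepmatern} over the compact set $S\subseteq(0,\infty)^{2d_u+1}$; Lemma~\ref{lem:norm_const} handles this in the classical Matérn case, and the separable version follows by a similar Fourier-transform argument applied factor-by-factor on $\mathbb{R}$, but it is not formally proved in the excerpt. Once this uniformity is in hand, the rest of the proof is a straightforward adaptation of Theorem~\ref{thm:var_conv_nu}.
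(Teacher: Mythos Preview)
Your proposal is correct and matches the paper's own proof, which consists of the single sentence ``The proof is identical to Theorem~\ref{thm:var_conv_nu}, using Theorem~\ref{thm:mean_conv_sep} instead of Theorem~\ref{thm:mean_conv_nu}.'' You have unpacked precisely what that sentence means: Proposition~\ref{prop:predvar_sup}, then the $L^2 \to L^\infty$ bound, then the tensor-product Sobolev embedding with $\beta_j = 1/2+\varepsilon$, then Theorem~\ref{thm:mean_conv_sep}, then the norm equivalence from Proposition~\ref{prop:native_sepmatern} together with compactness of $S$. Your identification of the uniform bound on $C'_{\mathrm{up}}(\theta_{\mathrm{cov}})$ as the only point not formally established is accurate; the paper handles this in the same way you suggest, by appealing to compactness of $S$ without spelling out the separable analogue of Lemma~\ref{lem:norm_const}.
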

\begin{proof}
The proof is identical to Theorem \ref{thm:var_conv_nu}, using Theorem \ref{thm:mean_conv_sep} instead of Theorem \ref{thm:mean_conv_nu}.
\end{proof}

\subsection{Point-wise prediction error}\label{ssec:pred_err}
We now briefly discuss the point-wise prediction error, i.e. the error in using $m_N^f(\widehat \theta_N)$ or $f_N(\widehat \theta_N)$ to predict $f(u)$, for an observed location $u \in U \setminus D_N$. This error is often considered in the spatial statistics literature, see e.g. \cite{stein88,stein93,py01}.

For prediction using the mean $m_N^f(\widehat \theta_N)$, we immediately obtain an error bound using Theorem \ref{thm:mean_conv_nu} or \ref{thm:mean_conv_sep}, together with { the Sobolev embedding theorem, which gives the compact embedding of $H^{d_u/2+\varepsilon}(U)$ into the space of bounded continuous functions (see e.g. \cite[Theorem 4.12, Part II]{adams})}
\begin{align*}
| f(u) - m_N^f(\widehat \theta_N; u)  | &\leq \sup_{u \in U} | f(u) - m_N^f(\widehat \theta_N; u)  | \\
&\leq \begin{cases} C \| f - m_N^f(\widehat \theta_N)\|_{H^{d_u/2 + \varepsilon}(U)}, & \text{in the set-up of Theorem \ref{thm:mean_conv_nu}},  \\   C \| f - m_N^f(\widehat \theta_N)\|_{H^{1/2 + \varepsilon}_{\otimes^{{d_u}}}(U)}, & \text{in the set-up of Theorem \ref{thm:mean_conv_sep}}. \end{cases}
\end{align*}
Convergence of this prediction error to zero as $N \rightarrow \infty$ then follows, under very mild assumptions on the estimated hyper-parameters (as in Theorem \ref{thm:mean_conv_nu} or \ref{thm:mean_conv_sep}).

For prediction using the predictive process $f_N(\widehat \theta_N)$, we obtain a bound on the error 
\[
\EE((f(u) - f_N(\widehat \theta_N; u)^2))^{1/2} = k_N(\widehat \theta_N; u,u)^{1/2},
\]
where the expectation is over the distribution of $f_N(\widehat \theta_N)$.
As in the proof of Theorem \ref{thm:var_conv_nu}, we use Theorem \ref{thm:mean_conv_nu} or \ref{thm:mean_conv_sep}, together with Proposition \ref{prop:predvar_sup} and { the Sobolev embedding theorem, which gives the compact embedding of $H^{d_u/2+\varepsilon}(U)$ into the space of bounded continuous functions (see e.g. \cite[Theorem 4.12, Part II]{adams})}:
\begin{align*}
k_N(\widehat \theta_N; u,u)^{1/2}  &\leq \sup_{u \in U} k_N(\widehat \theta_N; u,u)^{1/2}\\
&\leq \begin{cases} C \sup_{\|g\|_{H_{k(\theta)}=1}}  \| g - m_N^g(\widehat \theta_N)\|_{H^{d_u/2 + \varepsilon}(U)}, & \text{in the set-up of Theorem \ref{thm:mean_conv_nu}},  \\   C \sup_{\|g\|_{H_{k(\theta)}=1}}  \| g - m_N^g(\widehat \theta_N)\|_{H^{1/2 + \varepsilon}_{\otimes^{{d_u}}}(U)}, & \text{in the set-up of Theorem \ref{thm:mean_conv_sep}}. \end{cases}
\end{align*}
Convergence of this prediction error to zero as $N \rightarrow \infty$ then follows, under very mild assumptions on the estimated hyper-parameters (as in Theorem \ref{thm:mean_conv_nu} or \ref{thm:mean_conv_sep}).

We can also obtain convergence rates for the prediction error
\[
\EE((f(u) - m_N^f(\widehat \theta_N; u)^2))^{1/2},
\]
where the expected value is now over some probability distribution over $f$. In particular, consider the setting $f \sim \text{GP}(0, k(\theta_0;\cdot, \cdot))$, for some true value $\theta_0$ of the hyper-parameters, as is used for example \cite{stein88,stein93,py01}. Assume for simplicity that we are using Mat\'ern kernels; similar arguments apply in the case of separable Mat\'ern kernels. 

Every sample of the Gaussian process $\text{GP}(0, k(\theta_0;\cdot, \cdot))$ belongs to the Sobolev space $H^{\nu_0}(U)$ (see e.g. \cite{scheuerer10}). Hence, we can apply Theorem \ref{thm:mean_conv_nu} sample-wise, with $\tilde \tau = \nu_0$. The error bounds on $|f(u) - m_N^f(\theta; u)|$ coming from Theorem \ref{thm:mean_conv_nu} depend on $f$ only through $\|f\|_{H^{\tilde \tau}(U)}$, from which it follows that 
\[
\EE_{\theta_0}((f(u) - m_N^f(\widehat \theta_N; u)^2))^{1/2} \leq C h_{{D_N},U}^{\min\{\nu_0, \tau_{-}\} - {{d_u}}/2 - \varepsilon} \rho_{{D_N},U}^{\max\{\tau_{+} - \nu_0,0\}} \EE_{\theta_0}(\|f\|_{H^{\nu_0}(U)}^2)^{1/2}.
\]
Since $\EE_{\theta_0}(\|f\|_{H^{\nu_0}(U)}^2)^{1/2} < \infty$ (see e.g. \cite[Propositions A.2.1 and A.2.3]{vw96}), the above gives convergence to zero of the prediction error as $N$ tends to $\infty$, under the assumptions of Theorem \ref{thm:mean_conv_nu}. Note that this does not require any particular relation between the true hyper-parameters $\theta_0$ and the employed hyper-parameters $\theta$.

If the observed values of $f$ at the design points $D_N$ are used to form the estimate $\widehat \theta_N$, then
\[
\EE_{\theta_0}((f(u) - m_N^f(\widehat \theta_N; u)^2))^{1/2} \leq C \sup_{f \in H^{\nu_0}(U)} \left( h_{{D_N},U}^{\min\{\nu_0, \tau_{-}\} - {{d_u}}/2 - \varepsilon} \rho_{{D_N},U}^{\max\{\tau_{+} - \nu_0,0\}} \right) \EE_{\theta_0}(\|f\|_{H^{\nu_0}(U)}^2)^{1/2},
\]
and convergence to zero as $N \rightarrow \infty$ is again guaranteed. The restrictions on $\widehat \sigma^2_N$ and $\widehat \lambda_N$ are unchanged from Theorem \ref{thm:mean_conv_nu}. The restrictions on $\widehat \nu_N$ become slightly stronger, and require the quantities $\tau_{-}$ and $\tau_{+}$ to be uniformly bounded in $f$. This can easily be achieved by imposing a fixed upper and lower bound on $\widehat \nu_N$, which is independent of the observed function values $f(u^1), \dots, f(u^N)$. Alternatively H\"older's inequality can be used to weaken the supremum in the above bound to an $L^p$-norm, with $p < \infty$.

\section{Bayesian Inverse Problems}\label{sec:bip}
Our motivation for studying Gaussian process emulators was their use to approximate posterior distributions in Bayesian inverse problems. 
The inverse problem of interest is to determine the unknown parameters $u \in U$ from noisy data $y \in \R^{d_y}$ given by
\begin{equation}\label{eq:def_inv}
y = \mathcal G(u) + \eta.
\end{equation}
We assume that the noise $\eta$ is a realisation of the $\mathbb R^{d_y}$-valued Gaussian random variable $\mathcal N(0, \Gamma)$, for some known, positive-definite covariance matrix $\Gamma$, and that the parameter space $U$ is a compact subset of $\R^{{d_u}}$, for some finite ${{d_u}} \in \N$. The map $\mathcal G$ will be referred to as the {\em parameter-to-observation map} or {\em forward model}. For $x \in \R^m, m \in \N$, we denote by $\| x \|_2 = x^Tx$ the Euclidean norm, and by $\| x \|_A = x^T A^{-1} x$ the norm weighted by (the inverse of) a positive-definite matrix $A \in \R^{m \times m}$.

We adopt a Bayesian perspective in which, in the absence of data, $u$ is distributed according to a prior measure $\mu_0$. We are interested in the posterior distribution $\mu^y$ on the conditioned random variable $u | y$, which can be characterised as follows.

\begin{proposition} (\cite{kaipio2005statistical,stuart10}) Suppose $\mathcal G : U \rightarrow \R^{d_y}$ is continuous and $\mu_0(U) = 1$. Then the posterior distribution $\mu^y$ on the conditioned random variable $u | y$ is absolutely continuous with respect to $\mu_0$ and given by Bayes' Theorem: 
\begin{equation*}\label{eq:rad_nik}
\frac{d\mu^y}{d\mu_0}(u) = \frac{1}{Z} \exp\big(-\Phi(u)\big),
\end{equation*}
where
\begin{equation*}\label{eq:def_like}
\Phi(u) = \frac{1}{2} \left\| y -  \mathcal G (u) \right\|_{\Gamma}^2  \quad \text{and } \qquad Z = \EE_{\mu_0}\Big(\exp\big(-\Phi(u)\big)\Big).
\end{equation*}
\end{proposition}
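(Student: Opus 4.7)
The plan is to identify the joint distribution of $(u,y)$ on $U \times \R^{d_y}$ from the data model \eqref{eq:def_inv}, express it as a density against a product reference measure, and then extract the conditional distribution of $u$ given $y$ via a standard disintegration/Bayes argument.

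First I would use the independence of $u \sim \mu_0$ and $\eta \sim \mathcal{N}(0,\Gamma)$ together with $y = \mathcal{G}(u)+\eta$ to write the joint law $\nu$ of $(u,y)$ as
$$\int F(u,y)\, d\nu(u,y) = \int_U \int_{\R^{d_y}} F(u, \mathcal{G}(u)+\eta)\, \rho(\eta)\, d\eta\, d\mu_0(u),$$
for bounded measurable $F$, where $\rho$ is the Lebesgue density of $\mathcal{N}(0,\Gamma)$. A change of variables $\eta \mapsto y - \mathcal{G}(u)$ in the inner integral (using the translation invariance of Lebesgue measure and the continuity, hence Borel measurability, of $\mathcal{G}$) then yields
$$\nu(du, dy) = \rho\bigl(y - \mathcal{G}(u)\bigr)\, d\mu_0(u)\, dy,$$
so that $\nu \ll \mu_0 \otimes \lambda_{\R^{d_y}}$ with density $\rho(y-\mathcal{G}(u)) = (2\pi)^{-d_y/2}(\det \Gamma)^{-1/2}\exp(-\Phi(u))$.

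Next I would compute the marginal distribution of $y$ by integrating out $u$, obtaining a Lebesgue density proportional to $Z$. To ensure the marginal density is strictly positive and finite I would observe that continuity of $\mathcal{G}$ on the compact set $U$ forces $\Phi(\cdot)$ to be bounded on $U$ for each fixed $y$, and combined with $\mu_0(U)=1$ this yields $0 < Z \leq 1$. Finally, I would invoke the standard disintegration theorem for measures absolutely continuous with respect to a product reference measure on a product of Polish spaces to conclude that the regular conditional distribution of $u$ given $y$ has Radon-Nikodym derivative with respect to $\mu_0$ equal to the ratio of the joint density to the $y$-marginal density, namely $\exp(-\Phi(u))/Z$.

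The main obstacle is the measure-theoretic justification of this last step, rather than any analytic difficulty. The cleanest route is either to cite a general theorem on the existence of regular conditional probabilities on Borel spaces, or to verify directly, via Fubini applied to the product-density representation, that the candidate posterior satisfies the defining identity $\int_A \mu^y(B)\, d\nu_Y(y) = \nu(B \times A)$ for all Borel $A \subseteq \R^{d_y}$ and $B \subseteq U$. The strict positivity $Z>0$ secured above is what guarantees that the ratio is a well-defined probability density $\nu_Y$-almost surely, so the disintegration can indeed be represented in the claimed pointwise form.
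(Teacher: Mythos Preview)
Your argument is correct and is the standard derivation: write the joint law of $(u,y)$ as a density against $\mu_0\otimes\lambda$, check $0<Z<\infty$ using continuity of $\mathcal G$ on compact $U$ together with $\mu_0(U)=1$, and disintegrate. The paper does not actually supply a proof of this proposition; it simply cites \cite{kaipio2005statistical,stuart10}, and your sketch is essentially the argument one finds in those references (in particular the framework of \cite{stuart10}). One small remark: you invoke compactness of $U$ to bound $\Phi$, which is not stated in the proposition itself but is a standing assumption in the surrounding text, so this is fine in context.
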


%
%

Common to many of practical applications is that the evaluation of the parameter-to-observation map $\mathcal G$ is analytically impossible and computationally very expensive, and, in simulations, it is therefore often necessary to approximate $\mathcal G$ (or directly $\Phi$) by a surrogate model. In this work, we are interested in Gaussian process emulators as surrogate models, as already discussed in \cite{st18}. 

{
\begin{remark} {\em (Distribution of the noise $\eta$)} The assumption that the distribution of the observational noise $\eta$ is Gaussian with zero mean is not essential, and is for ease of presentation only. Inclusion of a non-zero mean, representing for example model discrepancy \cite{kennedy2001bayesian}, is straightforward, and leads only to a shift in the misfit functional $\Phi$. Other distributions, leading to other forms of the log-likelihood $\Phi$, are also possible, and it is only the smoothness of $\Phi$ as a function of $u$ that is important for the analysis presented in this paper. See for example \cite{lst18} for a more general formulation. 
\end{remark}
}

\section{Approximation of the Bayesian Posterior Distribution}\label{sec:bip_error}
We now use the hierarchical Gaussian process emulator to define computationally cheaper approximations to the Bayesian posterior distribution $\mu^y$. We will consider emulation of either the parameter-to-observation map $\mathcal G: U \rightarrow \R^{d_y}$ or the negative log-likelihood $\Phi:U \rightarrow \R$. 
An emulator of $\mathcal G$ in the case ${d_y} > 1$ is constructed by emulating each entry independently.

The analysis presented in this section is for the most part independent of the specific covariance kernel used to construct the Gaussian process emulator. When the analysis does depend on the covariance kernel,  we again consider the classical and separable Mat\'ern families.

\subsection{Approximation Based on the Predictive Mean}
Using simply the predictive mean of a Gaussian process emulator of the parameter-to-observation map $\mathcal G$ or the negative log-likelihood $\Phi$, we can define the approximations $\mu^{y,N, \mathcal G, \theta}_\mathrm{mean}$ and $\mu^{y,N, \Phi, \theta}_\mathrm{mean}$, given by
\begin{align*}
\frac{d\mu^{y,N, \mathcal G, \theta}_\mathrm{mean}}{d\mu_0}(u) &= \frac{1}{Z^{N, \mathcal G, \theta}_\mathrm{mean}} \exp\big(-\frac{1}{2} \left\| y -  m^\mathcal G _N(\theta;  u) \right\|_{\Gamma}^2\big), \\
Z^{N, \mathcal G, \theta}_\mathrm{mean} &= \EE_{\mu_0}\Big(\exp\big(-\frac{1}{2} \left\| y -  m^\mathcal G _N(\theta) \right\|_{\Gamma}^2\big)\Big), \\
\frac{d\mu^{y,N, \Phi, \theta}_\mathrm{mean}}{d\mu_0}(u) &= \frac{1}{Z^{N, \Phi,\theta}_\mathrm{mean}} \exp\big(- m_N^\Phi(\theta;  u)\big), \\
Z^{N, \Phi,\theta}_\mathrm{mean} &= \EE_{\mu_0}\Big(\exp\big(-m_N^\Phi(\theta)\big)\Big),
\end{align*}
{where $m_N^\mathcal G(\theta;  u) = [m_N^{\mathcal G^1}(\theta;  u), \dots, m_N^{\mathcal G^{d_y}}(\theta;  u)] \in \R^{d_y}$.} 

We have the following result on the convergence of the approximate posterior distributions. A combination of Theorem \ref{thm:hell_mean} with Theorem \ref{thm:mean_conv_nu} or Theorem \ref{thm:mean_conv_sep} allows us to obtain convergence rates in $N$ for the error in approximate posterior distributions. 

\begin{theorem}\label{thm:hell_mean} Suppose we have a sequence of estimates $\{\widehat \theta_N\}_{N=1}^\infty \subseteq$, for some compact set $S \subseteq R_\theta$. Assume
\begin{itemize}
\item[(a)] $U \subseteq \R^{{d_u}}$ is compact,
\item[(b)] $\sup_{u \in U} \| \mathcal G(u) -  m^\mathcal G _N(\widehat \theta_N; u) \|$ and $\sup_{u \in U} | \Phi(u) -  m^\Phi_N(\widehat \theta_N; u) |$ can be bounded uniformly in $N$, 
\item[(c)] $\sup_{u \in U}\|\mathcal G(u)\| \leq C_\mathcal G < \infty$.
\end{itemize} 
Then there exist constants $C_1$ and $C_2$, independent of $N$, such that
\begin{align*}
\dhh(\mu^y, \mu^{y,N,\mathcal G, \widehat \theta_N}_\mathrm{mean}) & \leq C_1 \left\| \mathcal G -  m^\mathcal G _N(\widehat \theta_N)  \right\|_{L^2_{\mu_0}(U; \, \R^{d_y})},  \\
\text{and} \quad \dhh(\mu^y, \mu^{y,N, \Phi, \widehat \theta_N}_\mathrm{mean}) &\leq C_2 \left\|\Phi - m^\Phi _N(\widehat \theta_N) \right\|_{L^2_{\mu_0}(U)}.
\end{align*}
\end{theorem}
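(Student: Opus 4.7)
The plan is to follow the well-established template for bounding the Hellinger distance between a posterior and an approximated posterior that share a reference measure $\mu_0$, as developed in \cite{stuart10} and used in \cite{st18}. Write $\rho(u) = Z^{-1}\exp(-\Phi(u))$ and $\rho^N(u) = (Z^{N})^{-1}\exp(-\Phi^N(u))$ for the densities of $\mu^y$ and $\mu^{y,N,\cdot,\widehat\theta_N}_\mathrm{mean}$ with respect to $\mu_0$, where in the $\Phi$-case $\Phi^N = m_N^\Phi(\widehat\theta_N)$ and in the $\mathcal{G}$-case $\Phi^N(u) = \tfrac12\|y - m_N^{\mathcal{G}}(\widehat\theta_N;u)\|_\Gamma^2$. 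The squared Hellinger distance expands as
\[
2\,\dhh(\mu^y,\mu^{y,N})^2 = \int_U \left( \frac{e^{-\Phi(u)/2}}{Z^{1/2}} - \frac{e^{-\Phi^N(u)/2}}{(Z^N)^{1/2}} \right)^2 \mathrm{d}\mu_0(u),
\]
which after adding and subtracting $e^{-\Phi^N/2}/Z^{1/2}$ and using $(a+b)^2 \leq 2a^2+2b^2$ reduces to two pieces: one controlled by $\int (e^{-\Phi/2} - e^{-\Phi^N/2})^2 \mathrm{d}\mu_0$, and the other by $|Z^{1/2} - (Z^N)^{1/2}|^2$.

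\textbf{The $\Phi$-emulator case.} First I would use assumption (b), which gives a uniform bound $|\Phi^N(u)| \leq M$ on $U$ independent of $N$; since $\Phi(u) \geq 0$ is continuous on the compact set $U$, it is also bounded. From the Lipschitz estimate $|e^{-a/2} - e^{-b/2}| \leq \tfrac12 e^{-\min(a,b)/2}|a-b|$, valid for $a,b \in \R$, we obtain a pointwise bound
\[
\bigl(e^{-\Phi(u)/2} - e^{-\Phi^N(u)/2}\bigr)^2 \leq C\,|\Phi(u) - \Phi^N(u)|^2,
\]
and integrating with respect to $\mu_0$ gives the $L^2_{\mu_0}$ bound on the first piece. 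Similarly, $|Z - Z^N| \leq \int |e^{-\Phi} - e^{-\Phi^N}| \mathrm{d}\mu_0 \leq C \|\Phi - \Phi^N\|_{L^1_{\mu_0}} \leq C \|\Phi - \Phi^N\|_{L^2_{\mu_0}}$ by Cauchy--Schwarz and $\mu_0(U)=1$, and since the uniform upper bound on $\Phi^N$ yields $Z,Z^N \geq c > 0$, we get $|Z^{1/2}-(Z^N)^{1/2}|^2 \leq c^{-1}|Z - Z^N|^2$. Combining the two pieces then yields the stated inequality for $\mu^{y,N,\Phi,\widehat\theta_N}_\mathrm{mean}$.

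\textbf{Reduction of the $\mathcal{G}$-emulator case.} For the first claim I would reduce it to the previous one by writing $\Phi^N(u) = \tfrac12\|y - m_N^\mathcal{G}(\widehat\theta_N;u)\|_\Gamma^2$ and comparing with $\Phi(u) = \tfrac12\|y - \mathcal{G}(u)\|_\Gamma^2$. The identity $\|a\|_\Gamma^2 - \|b\|_\Gamma^2 = \langle a-b, a+b\rangle_\Gamma$ gives
\[
|\Phi(u) - \Phi^N(u)| \leq \tfrac12 \|\mathcal{G}(u) - m_N^\mathcal{G}(\widehat\theta_N;u)\|_\Gamma \bigl( \|y - \mathcal{G}(u)\|_\Gamma + \|y - m_N^\mathcal{G}(\widehat\theta_N;u)\|_\Gamma \bigr).
\]
By assumption (c), $\|\mathcal{G}\|$ is uniformly bounded, and by assumption (b) the emulator error (hence $\|m_N^\mathcal{G}(\widehat\theta_N;\cdot)\|$) is uniformly bounded in $N$ as well, so the parenthesised factor is bounded by a constant $C(y,\Gamma,C_\mathcal{G})$ independent of $u$ and $N$. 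Hence $|\Phi(u)-\Phi^N(u)| \leq C \|\mathcal{G}(u) - m_N^\mathcal{G}(\widehat\theta_N;u)\|$ pointwise, which in particular gives a uniform upper bound on $\Phi^N$ so that the previous argument applies, and also transfers the $L^2_{\mu_0}$ bound from $\Phi - \Phi^N$ to $\mathcal{G} - m_N^\mathcal{G}(\widehat\theta_N)$.

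\textbf{Main obstacle.} The essential ingredients are all present in assumptions (a)--(c), so there is no deep obstacle; the principal care point is to track that the uniform-in-$N$ bounds provided by (b) give uniform lower bounds on $Z^N$ (needed to avoid blow-up when dividing by normalisation constants) and uniform upper bounds on the exponential factors, so that all constants in the estimates above are genuinely independent of $N$. The only case-specific care is the use of assumption (c) in the $\mathcal{G}$-case to bound the $\|y-\mathcal{G}\|_\Gamma + \|y-m_N^\mathcal{G}\|_\Gamma$ factor arising from linearising the squared misfit.
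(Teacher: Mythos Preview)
Your proposal is correct and follows exactly the standard Hellinger-distance argument underlying \cite[Theorem 4.2]{st18}, which is precisely what the paper invokes: split the squared Hellinger distance into an unnormalised-density piece and a normalising-constant piece, control the former via the local Lipschitz property of $e^{-x}$ and the latter via a uniform lower bound on $Z^N$, and in the $\mathcal G$-case reduce to the $\Phi$-case by linearising the squared misfit using assumptions (b) and (c). Your identification of the only delicate point---ensuring the constants are independent of $N$ via the uniform bounds in (b)---matches the paper's own emphasis.
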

\begin{proof} This is essentially \cite[Theorem 4.2]{st18}. The change from the distribution $\eta \sim \mathcal N(0, \sigma_\eta^2 \mathrm I)$, considered in \cite{st18}, to the more general distribution $\eta \sim \mathcal N(0, \Gamma)$ considered here in \eqref{eq:def_inv}, only influences the values of the constants $C_1$ and $C_2$, since all norms on $\R^{d_y}$ are equivalent. The constants $C_1$ and $C_2$ involve taking the supremum over $\widehat \theta_N$ over the corresponding constants in \cite[Theorem 4.2]{st18}, and we use the compactness of $S$ to make sure this can be bounded independently of $N$. Furthermore, it is sufficient for the quantities in {(b)} to be bounded uniformly in $N$ rather than converging to 0 as $N$ tends to infinity (cf \cite[Proof of Lemma 4.1]{st18}).
\end{proof}

Since Theorems \ref{thm:mean_conv_nu} and \ref{thm:mean_conv_sep} hold only on bounded domains $U$, we have for simplicity assumed that $U$ is bounded in assumption (a). This assumption can be relaxed in general (cf \cite{lst18}). Assumption (b) is required to ensure the constants $C_1$ and $C_2$ are independent of $N$. Assumption (c) is satisfied for example when $\mathcal G$ is continuous on $U$.

\subsection{Approximation Based on the Predictive Process}

We now consider approximations to the posterior distribution $\mu^y$ obtained using the full predictive processes $\mathcal G_N$ and $\Phi_N$.  In contrast to the mean, the full Gaussian process also carries information about the uncertainty in the emulator due to only using a finite number of function evaluations to construct it. Randomising the approximations to $\mathcal G$ and $\Phi$, with the randomness tuned to represent the surrogate modelling error, can be crucial to obtaining statistically efficient sampling algorithms for the approximate posterior distributions \cite{cfo19,cgssz17}.

For the remainder of this section, we denote by $\nu^{\mathcal G, \theta}_N$ the distribution of $\mathcal G_N(\theta)$ and by $\nu^{\Phi,\theta}_N$ the distribution of $\Phi_N(\theta)$, {for $N \in \mathbb N$}. The process $\mathcal G_N$ consists of ${d_y}$ independent Gaussian processes $\mathcal G_N^j$, so the measure $\nu^{\mathcal G, \theta}_N$ is a product measure, $\nu^{\mathcal G, \theta}_N  = \prod_{j=1}^{d_y} \nu^{\mathcal G^j, \theta}_N$. {$\Phi_N$ is a Gaussian process with mean $m_N^\Phi$ and covariance kernel $k_N$, and $\mathcal G_N^j$, for $j=1, \dots, {d_y}$, is a Gaussian process with mean $m_N^{\mathcal G^j}$ and covariance kernel $k_N$.} Replacing $\mathcal G$ by $\mathcal G_N$ in \eqref{eq:def_like}, we obtain the approximation $\mu^{y,N,\mathcal G, \theta}_\mathrm{sample}$ given by
\begin{equation*}\label{eq:rad_nik_sample}
\frac{d\mu^{y,N,\mathcal G, \theta}_\mathrm{sample}}{d\mu_0}(u) = \frac{1}{Z^{N, \mathcal G,\theta}_\mathrm{sample}} \exp\big(-\frac{1}{2} \left\| y -  \mathcal G_N (\theta; u) \right\|_{\Gamma}^2\big),
\end{equation*}
where 
\[
Z^{N, \mathcal G,\theta}_\mathrm{sample}= \EE_{\mu_0}\Big(\exp\big(-\frac{1}{2} \left\| y -  \mathcal G_N (\theta)\right\|_{\Gamma}^2\big)\Big).
\]
Similarly, we define for the predictive process $\Phi_N$ the approximation $\mu^{y,N,\Phi,\theta}_\mathrm{sample}$ by
\begin{align*}
\frac{d\mu^{y,N,\Phi,\theta}_\mathrm{sample}}{d\mu_0}(u) = \frac{1}{Z^{N, \Phi,\theta}_\mathrm{sample}} \exp\big(- \Phi_N(\theta; u)\big), \qquad Z^{N, \Phi,\theta}_\mathrm{sample} = \EE_{\mu_0}\Big(\exp\big(- \Phi_N(\theta)\big)\Big).
\end{align*}
The measures $\mu^{y,N,\mathcal G,\theta}_\mathrm{sample}$ and $\mu^{y,N,\Phi,\theta}_\mathrm{sample}$ are random approximations of the deterministic measure $\mu^y.$ { The uncertainty in the posterior distribution introduced in this way can be thought of representing the uncertainty in the emulator, which in applications can be large (or comparable) to the uncertainty present in the observations. A user may want to take this into account to "inflate" the variance of the posterior distribution and avoid over-confident inference.} 

Deterministic approximations of the posterior distribution $\mu^y$ can now be obtained by fixing a sample of $\mathcal G_N$ or $\Phi_N$, or by taking the expected value with respect to the distribution of the Gaussian processes. The latter results in the marginal approximations
\begin{align*}
\frac{d\mu^{y,N,\mathcal G,\theta}_\mathrm{marginal}}{d\mu_0}(u) &= \frac{1}{\EE_{\nu_N^{\mathcal G,\theta}}(Z^{N, \mathcal G,\theta}_\mathrm{sample})} \EE_{\nu_N^{\mathcal G,\theta}}\Big(\exp\big(-\frac{1}{2 \sigma_\eta^2} \left\| y -  \mathcal G_N (\theta; u) \right\|^2\big)\Big), \\
\frac{d\mu^{y,N,\Phi,\theta}_\mathrm{marginal}}{d\mu_0}(u) &= \frac{1}{\EE_{\nu_N^{\Phi,\theta}} (Z^{N, \Phi,\theta}_\mathrm{sample})} \EE_{\nu_N^{\Phi,\theta}} \Big(\exp\big(-\Phi_N (\theta; u) \big)\Big). 
\end{align*}
{It can be shown that the above marginal approximation of the likelihood is optimal in the sense that it minimises a certain $L^2$-error to the true likelihood \cite{sn17}. The likelihood in the marginal approximations involves computing an expectation, and methods from the pseudo-marginal MCMC literature can be used within an MCMC method in this context \cite{ar09, cgssz17}.}

We have the following result on the convergence of the approximate posterior distributions, which can then be combined with Theorems \ref{thm:mean_conv_nu} and \ref{thm:var_conv_nu} or Theorems \ref{thm:mean_conv_sep} and \ref{thm:var_conv_sep} to obtain convergence rates in $N$ for the error in approximate posterior distributions (cf \cite[Corollary 4.10 and 4.12]{st18}). This requires the parameter-to-observation map $\mathcal G$ to be sufficiently smooth.

\begin{theorem}\label{thm:hell_rand} Suppose we have a sequence of estimates $\{\widehat \theta_N\}_{N=1}^\infty \subseteq$, for some compact set $S \subseteq R_\theta$. Assume
\begin{itemize}
\item[(a)] $U \subseteq \R^{{d_u}}$ is bounded,
\item[(b)] $\sup_{u \in U} \left\| \mathcal G(u) -  m^\mathcal G _N(\widehat \theta_N; u) \right\|$ and $\sup_{u \in U} \left| \Phi(u) -  m^\Phi _N(\widehat \theta_N; u) \right|$ can be bounded uniformly in $N$, and $\sup_{u \in U} k_N(\widehat \theta_N; u,u)$ converges to 0 as $N$ tends to infinity,
\item[(c)] $\sup_{u \in U}\|\mathcal G(u)\| \leq C_\mathcal G < \infty$,
\item[(d)] $\EE\left(\sup_{u \in U} \left(\Phi_N(u) -  m^\Phi _N(\widehat \theta_N; u)\right)\right)$ and $\EE\left(\sup_{u \in U} \left(\mathcal G^j_N(u) -  m^{\mathcal{G
}^j} _N(\widehat \theta_N; u)\right)\right)$, for $1 \leq j \leq {d_y}$, can be bounded uniformly in $N$.
\end{itemize}
Then there exist constants $C_1, C_2, C_3$ and $C_4$, independent of $N$, such that for any $\delta > 0$,
\begin{align*}
\dhh(\mu^y, \mu^{y,N,\mathcal G}_\mathrm{marginal}) &\leq C_1 \left\|\Big(\EE_{\nu_N^\mathcal G} \Big(\|\mathcal G  - \mathcal G_N(\widehat \theta_N) \|^{1 + \delta} \Big)\Big)^{1/(1+\delta)}\right\|_{L^2_{\mu_0}(U)},  \\
\dhh(\mu^y, \mu^{y,N,\Phi}_\mathrm{marginal}) &\leq C_2 \left\|\EE_{\nu_N^\Phi} \left( |\Phi - \Phi_N(\widehat \theta_N)| ^{1 + \delta} \right)^{1/(1+\delta)} \right\|_{L^2_{\mu_0}(U)}.
\end{align*}
and
\begin{align*}
\left(\EE_{\nu_N^\mathcal G} \left(\dhh(\mu^y, \mu^{y,N,\mathcal G}_\mathrm{sample})^2\right) \right)^{1/2} &\leq C_3 \left\|\Big( \EE_{\nu_N^\mathcal G} \Big(\|\mathcal G  - \mathcal G_N(\widehat \theta_N) \|^{2+ \delta} \Big) \Big)^{1/(2+\delta)}\right\|_{L^2_{\mu_0}(U)}, \\
\left(\EE_{\nu_N^\Phi} \left(\dhh(\mu^y, \mu^{y,N,\Phi}_\mathrm{sample})^2\right) \right)^{1/2} &\leq C_4 \left\|\Big( \EE_{\nu_N^\Phi} \Big(|\Phi - \Phi_N(\widehat \theta_N) |^{2+\delta} \Big) \Big)^{1/(2+\delta)}\right\|_{L^2_{\mu_0}(U)}.
\end{align*}
\end{theorem}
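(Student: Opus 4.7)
The plan is to adapt the argument of \cite[Theorem 4.2, Corollaries 4.10, 4.12]{st18} to the hierarchical setting, the main extra work being to verify that all multiplicative constants can be chosen independently of $N$ using the compactness of $S$ and hypotheses (b)--(d). For all four bounds I would start from the standard Hellinger distance estimate: if $\mu_i \propto \exp(-\Psi_i)\mu_0$ with normalisers $Z_i$, then
\[
\dhh(\mu_1,\mu_2)^2 \;\leq\; \frac{2}{Z_1 \wedge Z_2}\,\EE_{\mu_0}\!\Big[\big(e^{-\Psi_1/2}-e^{-\Psi_2/2}\big)^2\Big] \;+\; 2\Big(\tfrac{1}{\sqrt{Z_1}}-\tfrac{1}{\sqrt{Z_2}}\Big)^2,
\]
applied with $(\Psi_1,\Psi_2)$ equal to $(\Phi,\tfrac12\|y-\mathcal G_N\|_\Gamma^2)$ or $(\Phi,\Phi_N)$ for the sample versions, and $(\Phi,-\log\EE_{\nu_N^\Phi}e^{-\Phi_N})$ (and analogously for $\mathcal G$) for the marginal versions. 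In each case the elementary bound $|e^{-a}-e^{-b}|\leq \tfrac12\max(e^{-a},e^{-b})|a-b|$ reduces matters to $L^2_{\mu_0}$-control of $|\Phi-\Phi'|$ (or $\|\mathcal G-\mathcal G'\|$), multiplied by a random prefactor involving $e^{-\Phi_N}$ (or $\exp(-\tfrac12\|y-\mathcal G_N\|_\Gamma^2)$).

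For the sample bounds I would apply the pointwise estimate for each realisation of the Gaussian process, then integrate in $\mu_0$ and take $\EE_{\nu_N^{\cdot,\widehat\theta_N}}$. To decouple the random prefactor from $|\Phi-\Phi_N|^2$ I would use H\"older's inequality with conjugate exponents $(2+\delta)/2$ and $(2+\delta)/\delta$, producing the exponent $2+\delta$ in the statement. For the marginal bounds I would first write $e^{-\Phi(u)}-\EE_{\nu_N^\Phi}e^{-\Phi_N(u)} = \EE_{\nu_N^\Phi}[e^{-\Phi(u)}-e^{-\Phi_N(u)}]$, apply Jensen to pass the modulus inside, and split the prefactor from $|\Phi-\Phi_N|$ with H\"older at exponents $(1+\delta)/1$ and $(1+\delta)/\delta$, giving the exponent $1+\delta$. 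The argument for $\mathcal G$ is identical, with Lipschitz bounds for $x\mapsto \tfrac12\|y-x\|_\Gamma^2$ on the ball of radius controlled by hypothesis (c) and the sup-norm bound in (b) converting $\|\mathcal G-\mathcal G_N\|$ into the corresponding misfit increment.

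The main obstacle is to check that \emph{all} emerging constants are uniform in $N$ and in $\widehat\theta_N\in S$. Four uniformities have to be secured: (i) a strictly positive lower bound on $Z$ and on the approximate normalisers, (ii) a uniform upper bound on $\EE_{\nu_N^\Phi}\bigl[\exp(p\,\sup_u|\Phi_N(\widehat\theta_N)|)\bigr]$ for $p=(1+\delta)/\delta$ and $p=(2+\delta)/\delta$, and analogously for $\mathcal G_N$, (iii) finiteness of these moments for every realisation needed in the sample bound, and (iv) boundedness in $\widehat\theta_N$ of the constants produced by \cite[Theorem 4.2]{st18}. Item (i) follows by combining the uniform sup-norm bounds on $m_N^\Phi(\widehat\theta_N)$ and $m_N^{\mathcal G}(\widehat\theta_N)$ from (b) with (c). Items (ii)--(iii) are exactly where hypothesis (d) is used: together with $\sup_u k_N(\widehat\theta_N;u,u)\to 0$ from (b), the Borell--TIS inequality for centred Gaussian processes yields uniform-in-$N$ Gaussian-tail control of $\sup_u|\Phi_N-m_N^\Phi|$ and hence uniform exponential moments of every order. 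Item (iv) follows by taking suprema over the compact set $S$ of the constants in \cite[Theorem 4.2]{st18}, as already done in the proof of Theorem \ref{thm:hell_mean}.

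Combining the pointwise reduction, the H\"older split, and the four uniform bounds gives the four claimed estimates with constants $C_1,C_2,C_3,C_4$ depending only on $S$, on the uniform bounds in (b)--(d), on $C_\mathcal G$, on $y$ and $\Gamma$, and on $\delta$; in particular they are independent of $N$.
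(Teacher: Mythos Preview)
Your proposal is correct and follows essentially the same route as the paper: the paper's proof simply invokes \cite[Theorems 4.9 and 4.11]{st18} and notes three adaptations (the general noise covariance $\Gamma$ affects only constants, the quantities in (b) need only be uniformly bounded rather than vanishing, and assumption (d) replaces the Sudakov--Fernique-based hypothesis of \cite{st18}), and your sketch is precisely a spelled-out version of that argument---Hellinger reduction, H\"older split at the appropriate exponents, and Borell--TIS combined with (b) and (d) to secure the uniform exponential moments.
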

\begin{proof} This is essentially \cite[Theorems 4.9 and 4.11]{st18}. As in Theorem \ref{thm:hell_mean}, it is sufficient for the first two quantities in (b) to be bounded uniformly in $N$ rather than converging to 0 as $N$ tends to infinity (cf \cite[Proof of Lemma 4.7]{st18}), and the change in the distribution of $\eta$ in \eqref{eq:def_inv} only influences the constants. In \cite{st18}, assumption (d) is replaced by an assumption involving the Sudakov-Fernique inequality (see Proposition \ref{prop:sud_fern} below), which is a sufficient condition for (d) to hold. However, that assumption is not satisfied in the case of the hierarchical Gaussian process emulators considered here, so we have introduced the more general assumption {(d)}.
\end{proof}

We have for simplicity again assumed that $U$ is bounded in assumption (a). This assumption can be relaxed in general; see \cite{lst18} for a more general statement of Theorem \ref{thm:hell_rand}. Assumptions (b) and (d) are required to ensure the constants $C_1, C_2, C_3$ and $C_4$ are independent of $N$. Assumption (c) is satisfied for example when $\mathcal G$ is continuous on $U$.

To verify assumption (d) in Theorem \ref{thm:hell_rand}, we make use of the following two results.

\begin{proposition}\label{prop:sud_fern} {\em (Sudakov-Fernique Inequality, \cite{daprato_zabczyk})} { Let $g$ and $h$ be scalar, Gaussian fields on the compact domain $U \subseteq \R^{{d_u}}$, and suppose $g$ and $h$ are almost surely bounded, i.e $\bbP[\sup_{u \in \overline U} g(u) < \infty] = \bbP[\sup_{u \in U} h(u) < \infty] = 1$.} Suppose $\EE((g(u)-g(u'))^2) \leq \EE((h(u)-h(u'))^2)$ and $\EE(g(u)) = \EE(h(u))$, for all $u,u' \in U$. Then
\[
\EE(\sup_{u \in U} g(u)) \leq \EE(\sup_{u \in U} h(u)). 
\]
\end{proposition}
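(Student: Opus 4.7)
The plan is to reduce the problem to a finite-dimensional comparison and then apply a smooth interpolation between the two Gaussian fields. First, since $U$ is compact and both fields are almost surely bounded, I would fix a countable dense subset $\{u_k\}_{k=1}^\infty \subseteq U$ and work with the finite maxima $M_n^g := \max_{1 \leq k \leq n} g(u_k)$ and $M_n^h := \max_{1 \leq k \leq n} h(u_k)$; monotone convergence then gives $\EE(M_n^g) \uparrow \EE(\sup_U g)$ and similarly for $h$, reducing matters to proving $\EE(M_n^g) \leq \EE(M_n^h)$ for every finite $n$. (For fields that are not a.s.\ continuous one instead invokes separability of the field to pass from the dense net to the full supremum.)

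The heart of the argument is the Gaussian interpolation. Set $m(u) := \EE(g(u)) = \EE(h(u))$ and, on an enlarged probability space, take independent centered copies $\tilde g := g - m$ and $\tilde h := h - m$. For $t \in [0,1]$ define the Gaussian field
\[
Z_t(u) := \sqrt{t}\,\tilde g(u) + \sqrt{1-t}\,\tilde h(u) + m(u),
\]
so that $Z_0$ is distributed as $h$ and $Z_1$ as $g$. To smooth the non-differentiable maximum, introduce the log-sum-exp approximation $F_\beta(x_1,\dots,x_n) := \beta^{-1} \log \sum_{k=1}^n e^{\beta x_k}$, which satisfies $\max_k x_k \leq F_\beta(x) \leq \max_k x_k + \beta^{-1}\log n$. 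I would then differentiate $\phi(t) := \EE\bigl(F_\beta(Z_t(u_1),\dots,Z_t(u_n))\bigr)$ in $t$ and apply Gaussian integration by parts (Stein's lemma) to each term. A direct computation shows
\[
\phi'(t) = \tfrac{1}{2}\sum_{j,k} \bigl(\mathrm{Cov}(\tilde g(u_j),\tilde g(u_k)) - \mathrm{Cov}(\tilde h(u_j),\tilde h(u_k))\bigr)\, \EE\bigl(\partial_j\partial_k F_\beta(Z_t)\bigr).
\]
A calculation of the Hessian of $F_\beta$ reveals that $\partial_j\partial_k F_\beta$ is $-\beta$ times an off-diagonal entry of a covariance-type matrix, and the double sum can be rewritten as a nonnegative combination of the increment variance differences $\EE\bigl((\tilde h(u_j)-\tilde h(u_k))^2\bigr) - \EE\bigl((\tilde g(u_j)-\tilde g(u_k))^2\bigr)$, which is $\geq 0$ by hypothesis (the common mean $m$ cancels when taking increments). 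Hence $\phi'(t) \geq 0$, so $\phi(0) \leq \phi(1)$, i.e. $\EE(F_\beta(h(u_1),\dots,h(u_n))) \geq \EE(F_\beta(g(u_1),\dots,g(u_n)))$.

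Finally, letting $\beta \to \infty$ gives $\EE(M_n^g) \leq \EE(M_n^h)$ for each $n$, and the dense-net approximation from the first step upgrades this to the claim $\EE(\sup_U g) \leq \EE(\sup_U h)$. The main obstacle is the Gaussian integration-by-parts step: it requires carefully rewriting the Hessian of $F_\beta$ so that the covariance differences appear in the combination $\EE((\tilde h(u_j)-\tilde h(u_k))^2) - \EE((\tilde g(u_j)-\tilde g(u_k))^2)$ with manifestly nonnegative weights, which is the algebraic core that makes the comparison inequality work. Verifying the integrability conditions needed to justify differentiating under the expectation, and the boundedness of the fields needed to use a.s.\ finiteness of the suprema, are routine technicalities that follow from the finite-dimensional Gaussian structure and the a.s.\ boundedness hypothesis.
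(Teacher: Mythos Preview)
The paper does not prove this proposition; it is quoted from the literature with a reference (\cite{daprato_zabczyk}) and used as a black box. There is therefore no paper-proof to compare against. What you have written is essentially the standard Gaussian interpolation (Kahane--Slepian) proof that one finds in the references, and the overall strategy --- reduction to finite point sets via a dense net and monotone convergence, smoothing the maximum by $F_\beta$, differentiating the interpolated expectation, Gaussian integration by parts, and rewriting via the row-sum-zero property of the Hessian of $F_\beta$ --- is correct and complete in outline.

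There is, however, a sign bookkeeping slip. With $Z_0 \stackrel{d}{=} h$, $Z_1 \stackrel{d}{=} g$ and $H_{jk}=\partial_j\partial_k F_\beta = \beta(\delta_{jk}p_j - p_jp_k)$, one has $\sum_j H_{jk}=0$, and hence
\[
\sum_{j,k}\bigl(C^g_{jk}-C^h_{jk}\bigr)H_{jk}
= -\tfrac{1}{2}\sum_{j\neq k}\Bigl[\EE\bigl((\tilde g_j-\tilde g_k)^2\bigr)-\EE\bigl((\tilde h_j-\tilde h_k)^2\bigr)\Bigr]H_{jk}
= -\tfrac{\beta}{2}\sum_{j\neq k}p_jp_k\Bigl[\EE\bigl((\tilde h_j-\tilde h_k)^2\bigr)-\EE\bigl((\tilde g_j-\tilde g_k)^2\bigr)\Bigr],
\]
which is $\leq 0$ under the hypothesis. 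So $\phi'(t)\leq 0$, giving $\phi(1)\leq\phi(0)$, i.e.\ $\EE\bigl(F_\beta(g)\bigr)\leq\EE\bigl(F_\beta(h)\bigr)$. You stated $\phi'(t)\geq 0$ and $\phi(0)\leq\phi(1)$, and then wrote the (correct) conclusion $\EE(F_\beta(h))\geq\EE(F_\beta(g))$, which is inconsistent with $\phi(0)\leq\phi(1)$ given your identification $\phi(0)=\EE(F_\beta(h))$. Two sign errors have cancelled; the final inequality and the method are right, but the intermediate chain should be corrected.
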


\begin{proposition}\label{prop:dudley} {\em (Dudley's Inequality, \cite{dudley67,ledoux_talagrand})} Let $g$ be a scalar Gaussian field on the compact domain $U \subseteq \R^{{d_u}}$, with zero mean $\EE(g(u)) \equiv 0$, and define on $U$ the pseudo-metric $d_g(u,u') = \EE\big( (g(u) - g(u'))^2\big)^{1/2}$. For $\epsilon > 0$, denote by $M(U, d_g, \epsilon)$ the minimal number of open $d_g$-balls of radius $\epsilon$ required to cover $U$. Then
\[
\EE(\sup_{u \in U} g(u)) \leq C_D \int_{0}^\infty \sqrt{\log M(U, d_g, \epsilon)} \mathrm{d}\epsilon,
\]
for a constant $C_D$ independent of $g$.
\end{proposition}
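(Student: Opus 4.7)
The plan is to prove this via the standard \emph{generic chaining} (or Dudley chaining) argument. Fix a base point $u_0 \in U$ and observe that since $g$ is centered, $\EE(\sup_{u \in U} g(u)) = \EE(\sup_{u \in U}(g(u) - g(u_0)))$, so I can work with increments. For each integer $k \geq 0$, I would choose a minimal $2^{-k}$-net $U_k \subseteq U$ in the pseudo-metric $d_g$, so that $|U_k| = M(U, d_g, 2^{-k})$, and define $\pi_k : U \to U_k$ by assigning to each $u$ a (fixed) nearest point in $U_k$, so that $d_g(u, \pi_k(u)) \leq 2^{-k}$. For $k=0$ I can take $U_0 = \{u_0\}$ (noting the net is trivial beyond the $d_g$-diameter of $U$, and $\log 1 = 0$ contributes nothing to the integral).

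The key identity is the telescoping decomposition
\[
g(u) - g(u_0) = \sum_{k=1}^{\infty} \bigl(g(\pi_k(u)) - g(\pi_{k-1}(u))\bigr),
\]
valid almost surely under $d_g$-continuity of $g$, which follows from boundedness and the assumed finiteness of the entropy integral (a mild regularity check to be justified in a preliminary step). Taking suprema and expectations,
\[
\EE\bigl(\sup_{u \in U} g(u)\bigr) \leq \sum_{k=1}^{\infty} \EE\Bigl(\sup_{u \in U} \bigl|g(\pi_k(u)) - g(\pi_{k-1}(u))\bigr|\Bigr).
\]
For each $k$, the increment $g(\pi_k(u)) - g(\pi_{k-1}(u))$ is a centered Gaussian with standard deviation at most $d_g(\pi_k(u), \pi_{k-1}(u)) \leq d_g(\pi_k(u),u) + d_g(u,\pi_{k-1}(u)) \leq 3 \cdot 2^{-k}$, and as $u$ varies the number of distinct pairs $(\pi_k(u), \pi_{k-1}(u))$ is at most $|U_k|\cdot|U_{k-1}| \leq M(U,d_g,2^{-k})^2$. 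Applying the standard bound $\EE(\max_{i \leq n} X_i) \leq C \sigma \sqrt{\log n}$ for centered Gaussians with variance $\sigma^2$ then yields
\[
\EE\Bigl(\sup_{u \in U} \bigl|g(\pi_k(u)) - g(\pi_{k-1}(u))\bigr|\Bigr) \leq C' \, 2^{-k} \sqrt{\log M(U,d_g,2^{-k})}.
\]

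The final step is to pass from the discrete dyadic sum to the continuous integral. Since $\epsilon \mapsto M(U,d_g,\epsilon)$ is non-increasing,
\[
\sum_{k=1}^{\infty} 2^{-k} \sqrt{\log M(U,d_g,2^{-k})} \leq 2 \sum_{k=1}^{\infty} \int_{2^{-(k+1)}}^{2^{-k}} \sqrt{\log M(U,d_g,\epsilon)} \, \mathrm d\epsilon \leq 2 \int_0^{\infty} \sqrt{\log M(U,d_g,\epsilon)} \, \mathrm d\epsilon,
\]
which combined with the previous display gives the claim with $C_D$ absorbing all multiplicative constants. The main obstacles I anticipate are technical rather than conceptual: (i) justifying the almost-sure convergence of the chaining telescoping sum (which requires enough regularity of $g$ with respect to $d_g$), and (ii) carefully handling the Gaussian maximal inequality to get an explicit, dimension-free constant. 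Both are standard and can be dispatched by invoking the sub-Gaussian tail bound together with a union bound argument.
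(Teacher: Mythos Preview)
Your chaining argument is correct and is exactly the standard proof of Dudley's inequality. However, note that the paper does not actually supply its own proof of this proposition: it is stated as a classical result with citations to \cite{dudley67,ledoux_talagrand}, and is simply invoked as a tool in the subsequent lemma. So there is no paper-proof to compare against; your proposal reproduces the textbook argument that those references contain.
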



The Sudakov-Fernique inequality is a comparison inequality between Gaussian processes, whereas Dudley's inequality relates extreme values of a Gaussian process to its metric entropy. These results can be used to verify assumption {(d)} in Theorem \ref{thm:hell_rand} for general covariance functions $k(\theta)$, but we will in the following lemma concentrate on the particular case of covariance kernels chosen from the Mat\'ern family or the separable Mat\'ern family.

\begin{lemma} Suppose $U \subseteq \R^{{d_u}}$ is compact, and $k(\theta)$ is chosen as either the Mat\'ern kernel in \eqref{eq:mat_cov} with $\theta = \{\nu, \lambda, \sigma^2\}$ and $\nu > 1$, or the separable Mat\'ern kernel in \eqref{eq:sepmat_cov}, with $\theta = \{\{\nu_j\}_{j=1}^{{d_u}}, \{\lambda_j\}_{j=1}^{{d_u}}, \sigma^2\}$ and $\nu_j > 1$, for $1 \leq j \leq {{d_u}}$. Assume $\{\widehat \theta_N\}_{N \in \N} \subseteq S$, for some bounded set $S \subseteq (0,\infty)^{d_\theta}$. Then there exists a constant $C$, independent of $N$, such that
\[
\EE(\sup_{u \in U} \Phi_N(u) -  m^\Phi _N(\widehat \theta_N; u)) \leq C, \quad \textrm{and} \quad \EE(\sup_{u \in U} \mathcal G^j_N(u) -  m^{\mathcal{G}^j} _N(\widehat \theta_N; u)) \leq C, \quad j=1,\dots,d_y.
\]
\end{lemma}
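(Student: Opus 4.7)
The plan is to apply Sudakov--Fernique (Proposition~\ref{prop:sud_fern}) to compare the centered predictive process to the centered prior, and then control the prior via Dudley's entropy bound (Proposition~\ref{prop:dudley}). First I would observe that for each fixed $N$, the process $\tilde g_N(u) := \Phi_N(u) - m_N^\Phi(\widehat\theta_N;u)$ is a centered Gaussian field on $U$ with covariance kernel $k_N(\widehat\theta_N;\cdot,\cdot)$, and similarly for each $\mathcal G^j_N - m_N^{\mathcal G^j}(\widehat\theta_N)$. Since the predictive distribution is a Gaussian conditional distribution obtained from the prior, any linear functional of the predictive process has variance no larger than that of the same linear functional of the prior; in particular, taking the increment functional,
\[
\EE\bigl((\tilde g_N(u) - \tilde g_N(u'))^2\bigr) = 2\bigl(k_N(\widehat\theta_N;u,u') \text{ terms}\bigr) \leq \EE\bigl((h(u) - h(u'))^2\bigr),
\]
where $h$ is a centered Gaussian process with covariance $k(\widehat\theta_N)$ (the unconditional prior). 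Both $\tilde g_N$ and $h$ have zero mean, and both are a.s.\ bounded on the compact set $U$ under the smoothness assumption $\nu>1$ (resp.\ $\nu_j>1$), so Sudakov--Fernique yields
\[
\EE\bigl(\sup_{u\in U} \tilde g_N(u)\bigr) \leq \EE\bigl(\sup_{u\in U} h(u)\bigr),
\]
and it suffices to bound the right-hand side uniformly over $\widehat\theta_N \in S$.

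Next I would estimate the pseudo-metric $d_h(u,u') = \EE((h(u)-h(u'))^2)^{1/2}$. In the classical Matérn case with $\nu>1$, the covariance $k_{\mathrm{Mat}}(\theta;u,u') = \sigma^2 \varphi_\nu(\|u-u'\|/\lambda)$ has $\varphi_\nu \in C^2$ at the origin (since $\nu>1$ guarantees mean-square differentiability of the process), so that $\varphi_\nu(r) = 1 - c_\nu r^2 + o(r^2)$. This gives a Lipschitz bound
\[
d_h(u,u')^2 = 2\sigma^2\bigl(1-\varphi_\nu(\|u-u'\|/\lambda)\bigr) \leq C(\sigma^2,\lambda,\nu)\,\|u-u'\|^2,
\]
and, because $\nu$ and $\lambda,\sigma^2$ range over a bounded subset $S$ of $(0,\infty)^{d_\theta}$ with $\nu>1$, the constant $C(\sigma^2,\lambda,\nu)$ can be taken uniform in $\widehat\theta_N \in S$ (using continuity in the hyper-parameters of $\varphi_\nu''(0)$ and the bounds on $\sigma^2, \lambda$). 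For the separable case, the same product-rule computation yields $d_h(u,u')^2 \leq \sum_{j=1}^{d_u} C_j\,|u_j-u'_j|^2 \leq C\|u-u'\|^2$ uniformly on $S$, using $\nu_j>1$ for every $j$.

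Given the Lipschitz bound $d_h \leq C_S\|u-u'\|$ uniform in $\widehat\theta_N$, the covering numbers satisfy $M(U,d_h,\epsilon) \leq M(U,\|\cdot\|,\epsilon/C_S) \leq C'\,(\mathrm{diam}(U)\,C_S/\epsilon)^{d_u}$ for small $\epsilon$, and $M(U,d_h,\epsilon)=1$ for $\epsilon$ larger than the $d_h$-diameter $R_S$ of $U$ (which is likewise uniform in $\widehat\theta_N$). Dudley's inequality then gives
\[
\EE\bigl(\sup_{u\in U} h(u)\bigr) \leq C_D \int_0^{R_S} \sqrt{\log M(U,d_h,\epsilon)}\,\mathrm d\epsilon \leq C_D \int_0^{R_S} \sqrt{d_u \log(C''/\epsilon)}\,\mathrm d\epsilon,
\]
which is finite and bounded independently of $N$. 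Combining with the Sudakov--Fernique step yields the required uniform bound for $\tilde g_N$, and the argument for each $\mathcal G^j_N - m_N^{\mathcal G^j}(\widehat\theta_N)$ is identical.

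The main obstacle is the uniformity of the constants over $\widehat\theta_N \in S$. This reduces to showing that the constant $C(\sigma^2,\lambda,\nu)$ controlling the quadratic behaviour of $\varphi_\nu$ at the origin, together with the $d_h$-diameter of $U$, can be chosen uniformly on $S$. This is where the hypothesis $\nu>1$ (respectively $\nu_j>1$) is crucial: if $\nu$ were allowed to approach $1$ from above, $\varphi_\nu''(0)$ would blow up and the Lipschitz constant would fail to be uniform; the strict inequality together with boundedness of $S$ ensures a uniform Lipschitz estimate and hence a uniform Dudley bound.
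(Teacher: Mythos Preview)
Your proposal is correct and follows the same two-step strategy as the paper: apply the Sudakov--Fernique inequality to pass from the centred predictive process to the centred prior process (the paper invokes \cite[Lemma~4.8]{st18} for the increment comparison $k_N \preceq k$, which is exactly your conditional-variance argument), and then bound the supremum of the prior via Dudley's entropy integral, uniformly over $\widehat\theta_N \in S$.

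The only substantive difference is in how you control the prior increments. The paper bounds the kernel by its Lipschitz constant, using the explicit identity $\partial_{u_i} k_{\mathrm{Mat}}(\{\sigma^2,\lambda,\nu\};u,u') = -\tfrac{(u_i-u_i')\|u-u'\|}{\nu\lambda^2}\,k_{\mathrm{Mat}}(\{\sigma^2,\lambda,\nu-1\};u,u')$ together with $0\le k_{\mathrm{Mat}}\le\sigma^2$, which yields $d_h(u,u')^2 \le 2L\|u-u'\|$ and hence covering numbers of order $\epsilon^{-2d_u}$. You instead use the $C^2$-expansion $\varphi_\nu(r)=1-c_\nu r^2+o(r^2)$ to obtain the sharper estimate $d_h(u,u')^2 \le C\|u-u'\|^2$ and covering numbers of order $\epsilon^{-d_u}$. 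Both suffice for the Dudley integral to be finite. The trade-off is uniformity: the second-derivative constant $c_\nu=\varphi_\nu''(0)$ blows up as $\nu\downarrow 1$, so your route needs $\inf_N \widehat\nu_N>1$ (which follows if $S$ is compact in $(1,\infty)\times(0,\infty)^2$, as the paper in fact uses in its proof), whereas the paper's first-order Lipschitz constant remains bounded even as $\nu\downarrow 1$. Under the intended compactness of $S$ both arguments go through; your last paragraph correctly identifies this as the crux of the uniformity, though ``strict inequality together with boundedness of $S$'' alone is not quite enough---one needs $S$ closed (hence compact) to force $\nu$ away from~$1$.
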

\begin{proof} We will give the proof for $\Phi$, the proof for $\mathcal G^j$ is similar. By \cite[Lemma 4.8]{st18}, it follows that the assumptions of Proposition \ref{prop:sud_fern} are satisfied with $g=\Phi_N - m_N^{\Phi}(\widehat \theta_N)$ and $h=\tilde \Phi_N$, where $\tilde \Phi_N$ is the Gaussian process with mean zero and covariance kernel $k(\widehat \theta_N)$. We hence have 
\[
\EE\left(\sup_{u \in U} \left(\Phi_N(u) -  m^\Phi _N(\widehat \theta_N; u)\right)\right) \leq \EE(\sup_{u \in U} \tilde \Phi_N(u)).
\]
We now use Proposition \ref{prop:dudley}, and consider separately the two types of covariance functions.

The covariance kernel $k_\mathrm{Mat}(\widehat \theta_N)$ is continuously differentiable, and hence Lipschitz continuous, jointly in $u$ and $u'$ for $\nu > 1$ (see e.g. \cite[Lemma C.1]{nt15}), and so 
\[
|k(\widehat \theta_N; u, u') - k(\widehat \theta_N; u, \tilde u)| \leq L(\widehat \theta_N) \|u' -\tilde  u\|_2 .
\]
Thus, for any $u,u' \in U$, 
\begin{align*}
d_{\tilde \Phi_N}(u,u')^2 & = \EE\big( (\tilde \Phi_N(u) - \tilde \Phi_N(u'))^2\big) \\
&= k_\mathrm{Mat}(\widehat \theta_N; u, u) - k_\mathrm{Mat}(\widehat \theta_N; u, u') - k_\mathrm{Mat}(\widehat \theta_N; u', u) + k_\mathrm{Mat}(\widehat \theta_N; u', u') \\
&\leq 2 L(\widehat \theta_N) \|u - u'\|_2,
\end{align*}
{ with Lipschitz constant $L(\widehat \theta_N) := \sup_{u,u' \in U} \|\nabla_u \; k_\mathrm{Mat}(\widehat \theta_N; u,u')\|$. Using the formulas $\frac{\mathrm d}{\mathrm d r} r^{\nu} B_\nu(r) = -r^{\nu} B_{\nu-1}(r) $ \cite{wolfram} and $\Gamma(\nu) = \nu \Gamma(\nu-1)$, as well as the chain rule, then gives
\begin{align*}
\frac{\mathrm d}{\mathrm d u_i} k_\mathrm{Mat}(\{\sigma^2, \lambda, \nu\}; u,u') &= \frac{\mathrm d}{\mathrm d u_i}  \frac{\sigma^2}{\Gamma(\nu) 2^{\nu-1}} \left(\frac{\|u-u'\|_2}{\lambda} \right)^\nu B_\nu\left(\frac{\|u-u'\|_2}{\lambda} \right) \\
&= - 2 (u_i - u_i') \frac{\|u-u'\|_2}{2 \nu \lambda^2} k_\mathrm{Mat}(\{\sigma^2, \lambda, \nu-1\}; u,u')
\end{align*}
Since $0 \leq k_\mathrm{Mat}(\{\sigma^2, \lambda, \nu-1\}; u,u') \leq \sigma^2$, it then follows from the compactness of $U$ and $S$ that $L(\widehat \theta_N)$ can be bounded independently of $N$: $L(\widehat \theta_N) \leq L := \sup_{\theta \in S} L(\theta)$.

It follows that $M(U,d_{\tilde \Phi_N},\epsilon) \sim \epsilon^{-2{{d_u}}}$ can be chosen independently of $N$, which together with Proposition \ref{prop:dudley} gives that $\EE(\sup_{u \in U} \tilde \Phi_N(u))$ can be bounded independently of $N$.
}

The proof for $k_\mathrm{sepMat}(\widehat \theta_N)$ is similar. Iterating the inequality $|ab -cd| \leq a |b-d| + d |a-c|$, for real, positive numbers $a,b,c,d$, and using the Lipschitz continuity of the Mat\'ern kernel for $\nu > 1$, as well as the bound $k_\textrm{Mat}(\widehat \theta_N; u_j, u_j') \leq \widehat \sigma_N^2$, we have for any $u,u' \in U$, 
\begin{align*}
d_{\tilde \Phi_N}(u,u')^2 & = \EE\big( (\tilde \Phi_N(u) - \tilde \Phi_N(u'))^2\big) \\
&= \prod_{j=1}^{{d_u}} k_\mathrm{Mat}(\widehat \theta_N; u_j, u_j) - \prod_{j=1}^{{d_u}} k_\mathrm{Mat}(\widehat \theta_N; u_j, u_j') - \prod_{j=1}^{{d_u}} k_\mathrm{Mat}(\widehat \theta_N; u_j', u_j) + \prod_{j=1}^{{d_u}} k_\mathrm{Mat}(\widehat \theta_N; u_j', u_j')\\
&\leq 2 {{d_u}} (\widehat \sigma_N^2)^{{{d_u}}-1} L(\widehat \theta_N) \sum_{j=1}^{{d_u}} |u_j - u'_j| \\
&\leq \tilde L \|u - u'\|_1,
\end{align*}
where $\tilde L = 2  {{d_u}} \sup_{\widehat \theta_N \in S} (\widehat \sigma_N^2)^{{{d_u}}-1} L(\widehat \theta_N)$.
It follows that $M(U,d_{\tilde \Phi_N},\epsilon) \sim \epsilon^{-2{{d_u}}}$, as in the case of Mat\'ern kernels. This finishes the proof.
\end{proof}

\section{Conclusions and Discussion}\label{sec:dis}
Gaussian process regression is frequently used to approximate complex models. In this work, we looked at how the accuracy of the approximation depends on the number of model evaluations used to construct the Gaussian process emulator, in the setting where the hyper-parameters in the Gaussian process emulator are a-priori unknown and inferred as part of the emulation. The main results here are Theorems \ref{thm:mean_conv_nu}, \ref{thm:var_conv_nu}, \ref{thm:mean_conv_sep} and \ref{thm:var_conv_sep}. These results show how fast we can expect the error to decay as a function of the number of model evaluations, and relate the decay rate of the error to the smoothness of both the function we are approximating and the employed kernel.

Generally speaking, we obtain error estimates of the form 
\[
\|f - m_N^f\|_{L^2(U)} \leq C_1 N^{-r_1} (\|f\| + \|m\|), \qquad \mathrm{and} \quad
\|k_N^{\frac{1}{2}}\|_{L^2(U)} \leq C_2 N^{-r_2},
\]
for the predictive mean $m_N^f$ (as in \eqref{eq:pred_eq}) and predictive variance $k_N$ (as in \eqref{eq:pred_eq2}). The constants $C_1$ and $C_2$ depend on all hyper-parameters, whereas the rates $r_1$ and $r_2$ depend only on the estimated smoothness parameter(s) and the true smoothness parameter(s) (i.e. the smoothness of the given $f$). For a given function $f$, convergence of the Gaussian process emulator $f_N$ is guaranteed under very mild assumptions on the values of the estimated hyper-parameters, cf Theorems \ref{thm:mean_conv_nu} and \ref{thm:mean_conv_sep} and the discussions thereafter.

{Let us briefly examine the optimality of our results. By \cite[Theorem 23]{nt06}, we have the following bound for the best approximation of $f \in H^{\tilde \tau}(U)$ based on $N$ function values $f(u^1), \dots, f(u^N)$:
\begin{align*}
c_1 N^{-\frac{\tilde \tau}{d_u}} &\leq \inf_{\substack{\{u^1, \dots, u^N\} \subseteq U  \\ \phi_1, \dots, \phi_N \in L^2(U)}} \sup_{\|f\|_{H^{\tilde \tau}(U)} \leq 1} \left\|f - \sum_{n=1}^N f(u^n) \phi_n \right\|_{L^2(U)} \leq c_2 N^{-\frac{\tilde \tau}{d_u}}.
\end{align*}
We can then draw the following conclusions about the rates in Theorem \ref{thm:mean_conv_nu}:
\begin{itemize}
\item We obtain optimal convergence rates when the estimated smoothness matches the true smoothness, i.e. $\tau(\widehat \theta_N) = \tilde \tau$, for any choice of design points $D_N$ with optimal decay of the fill distance $h_{D_N, U} \leq C N^{-\frac{1}{d_u}}$.
\item We obtain optimal convergence rates when the estimated smoothness is greater than or equal to the true smoothness, i.e. $\tau_{-} \geq \tilde \tau$, for any choice of {\em quasi-uniform }design points $D_N$ with optimal decay of the fill distance $h_{D_N, U} \leq C N^{-\frac{1}{d_u}}$.
\item We obtain suboptimal convergence rates when the estimated smoothness is greater than the true smoothness, i.e. $\tau_{-} \geq \tilde \tau$, and the design points $D_N$ are not quasi-uniform. These issues arise due to the bound depending on $\|m_N^f(\widehat \theta_N)\|_{H^{\tilde \tau}(U)}$, which can generally blow up as $N \rightarrow \infty$. If the mesh ratio grows with $\rho_{D_N, U} \leq C N^{r}$, Theorem \ref{thm:mean_conv_nu} still gives $\|f - m_N^f\|_{L^2(U)} \rightarrow 0$ as $N \rightarrow \infty$, provided $\tau_{+} \leq \tilde \tau (1 + (r d_u)^{-1})$. If $\tau_{+}$ is too large, convergence is no longer guaranteed.
\item We obtain suboptimal convergence rates when the estimated smoothness is less than the true smoothness,  i.e. $\tau_{+} \leq \tilde \tau$. Theorem \ref{thm:mean_conv_nu} still gives $\|f - m_N^f\|_{L^2(U)} \rightarrow 0$ as $N \rightarrow \infty$, under very mild conditions on $\tau_{-}$. We note that there are some results that allow to recover a faster convergence rate in this setting, but these typically require a particular relation between $\tau(\widehat \theta_N)$ and $\tilde \tau$, and are hence difficult to apply in a general setting. For example, the results in \cite[Section 11.5]{wendland} require $\tilde \tau \geq 2 \tau(\widehat \theta_N)$.
\end{itemize}

A similar discussion applies to $\|k_N^{\frac{1}{2}}\|_{L^2(U)}$. There are no optimal rates for comparison, but we note that Theorem \ref{thm:var_conv_nu} in some settings gives almost the optimal rate $N^{-\frac{\tilde \tau}{d_u} + \frac{1}{2}}$ for $\|f - \sum_{n=1}^N f(u^n) \phi_n \|_{L^\infty(U)}$ (defined as above, see \cite[Theorem 23]{nt06}), which is crucially used as an upper bound in the proof.

A similar discussion also applies to Theorems \ref{thm:mean_conv_sep} and \ref{thm:var_conv_sep}. By e.g. \cite[Theorem 4.5.1]{dtu18}, we have the following bound for the best approximation of $f \in H^{\{\tilde r\}}_{\otimes^{{d_u}}}(U)$ based on $N$ function values $f(u^1), \dots, f(u^N)$, in the case of the d-dimensional torus $U = \mathbb T^d$:
\begin{align*}
c_1 N^{-\tilde r} \log(N)^{\tilde r (d_u-1)} &\leq \inf_{\substack{\{u^1, \dots, u^N\} \subseteq U  \\ \phi_1, \dots, \phi_N \in L^2(U)}} \sup_{\|f\|_{H^{\{\tilde r\}}_{\otimes^{{d_u}}}(U)} \leq 1} \left\|f - \sum_{n=1}^N f(u^n) \phi_n \right\|_{L^2(U)}.
\end{align*}
An algorithm that achieves this lower bound is not yet known. Up to the logarithmic factors, Theorem \ref{thm:mean_conv_sep} again gives optimal convergence rates when the estimated smoothness matches the true smoothness and the fill distance of the one-dimensional point sets decays at the optimal rate $h_{X_j^{(i)},U_j} \leq C_1 m_i^{-1}$. For quasi-uniform one-dimensional point sets, we also get optimal convergence rates when the smoothness is overestimated. Underestimated smoothness leads to suboptimal convergence rates, and overestimated smoothness leads to convergence in Theorem \ref{thm:mean_conv_sep} only if $r_{+} < \tilde r (1 + r_\rho^{-1})$.}

We note here that the estimation of hyper-parameters in an empirical Bayes' framework can in general have severe effects on issues such as consistency of MAP estimators; see the recent work \cite{dhs19} for a discussion. Gaussian process regression, viewed as an inverse problem to recover the function $f$ from the function values $f(u^1), \dots, f(u^N)$, does however not fit into the framework considered in \cite{dhs19}, and the results in this paper show that we do get consistency of the MAP estimate (i.e. the convergence of $m_N^f$ to $f$) also with estimated hyper-parameters.

In section \ref{ssec:pred_err}, we briefly examine the point-wise prediction error, and bound the error of using the predictive mean $m_N^f$ or the predictive process $f_N$ (as in \eqref{eq:gp}) to predict $f(u)$ at some unobserved location $u \in U \setminus D_N$. Again, we obtain convergence to zero as $N$ tends to infinity under very mild assumptions on the estimated hyper-parameters. 

Furthermore, we looked at the effect of approximating the parameter-to-observation map, or directly the log-likelihood, in a Bayesian inference problem by a Gaussian process emulator in section \ref{sec:bip_error}. This results in a computationally cheaper approximation to the Bayesian posterior distribution, which is crucial in large scale applications. The main results in this context are Theorems \ref{thm:hell_mean} and \ref{thm:hell_rand}, which bound the error between the true posterior and the approximate posterior in terms of the accuracy of the Gaussian process emulator. These results give a justification for using Gaussian process emulators to approximate the Bayesian posterior, as they show that the approximate Bayesian posterior is close to the true posterior as long as the Gaussian process emulator approximates the data likelihood sufficiently well.

As a next step, it would be interesting to combine the results in this paper with results on the convergence of the estimated hyper-parameters $\widehat \theta_N$. For example, the recent work \cite{kwtos20} studies the asymptotics of the maximum likelihood estimator of the marginal variance $\sigma^2$ in the Mat\'ern model, under assumptions similar to this work. It would also be useful to include the Gaussian covariance kernel, corresponding to the limit $\nu = \infty$ in the Mat\'ern model, in our results.

\section*{Acknowledgements} The author would like to thank Andrew Stuart, Finn Lindgren, Peter Challenor, David Ginsbourger and S\"oren Wolfers for helpful discussions, and Toni Karvonen for pointing out the missed dependency of $C'$ and $h_0$ on $\tau(\widehat \theta_N)$ in the proof of Theorem \ref{thm:mean_conv_nu}. The author was partially supported by The Alan Turing Institute under the EPSRC grant EP/N510129/. The author would also like to thank the Isaac Newton Institute for Mathematical Sciences, Cambridge, for support and hospitality during the Uncertainty Quantification programme where work on this paper was partially undertaken.  This programme was supported by EPSRC grant EP/K032208/1.

\bibliographystyle{siam}
\bibliography{bibgp}

\begin{thebibliography}{10}

\bibitem{wolfram}
{\em
  http://functions.wolfram.com/bessel-typefunctions/besselk/20/showall.html},
  Accessed June 02, 2020.

\bibitem{adams}
{\sc R.~A. Adams and J.~J. Fournier}, {\em Sobolev spaces}, Academic press,
  2003.

\bibitem{anderes10}
{\sc E.~Anderes}, {\em On the consistent separation of scale and variance for
  {G}aussian random fields}, The Annals of Statistics,  (2010), pp.~870--893.

\bibitem{ar09}
{\sc C.~Andrieu and G.~O. Roberts}, {\em {The pseudo-marginal approach for
  efficient Monte Carlo computations}}, The Annals of Statistics,  (2009),
  pp.~697--725.

\bibitem{adt12}
{\sc R.~Arcang{\'e}li, M.~C.~L. de~Silanes, and J.~J. Torrens}, {\em Extension
  of sampling inequalities to sobolev semi-norms of fractional order and
  derivative data}, Numerische Mathematik, 121 (2012), pp.~587--608.

\bibitem{bg04}
{\sc H.-J. Bungartz and M.~Griebel}, {\em Sparse grids}, Acta numerica, 13
  (2004), pp.~147--269.

\bibitem{cm44}
{\sc R.~H. Cameron and W.~T. Martin}, {\em {Transformations of Wiener integrals
  under translations}}, Annals of Mathematics,  (1944), pp.~386--396.

\bibitem{cs07}
{\sc T.~Choi and M.~J. Schervish}, {\em On posterior consistency in
  nonparametric regression problems}, Journal of Multivariate Analysis, 98
  (2007), pp.~1969--1987.

\bibitem{cgssz17}
{\sc P.~R. Conrad, M.~Girolami, S.~S{\"a}rkk{\"a}, A.~Stuart, and
  K.~Zygalakis}, {\em Statistical analysis of differential equations:
  introducing probability measures on numerical solutions}, Statistics and
  Computing, 27 (2017), pp.~1065--1082.

\bibitem{cds12}
{\sc S.~Cotter, M.~Dashti, and A.~Stuart.}, {\em Variational data assimilation
  using targetted random walks}, Int. J. Numer. Meth. Fluids., 68 (2012),
  pp.~403--421.

\bibitem{cfo19}
{\sc T.~Cui, C.~Fox, and M.~J. O'Sullivan}, {\em A posteriori stochastic
  correction of reduced models in delayed-acceptance mcmc, with application to
  multiphase subsurface inverse problems}, International Journal for Numerical
  Methods in Engineering, 118 (2019), pp.~578--605.

\bibitem{daqing_thesis}
{\sc W.~Daqing}, {\em Fixed Domain Asymptotics and Consistent Estimation for
  Gaussian Random Field Models in Spatial Statistics and Computer Experiments},
  PhD thesis, National University of Singapore, 2010.

\bibitem{da84}
{\sc P.~Diamond and M.~Armstrong}, {\em Robustness of variograms and
  conditioning of kriging matrices}, Mathematical Geology, 16 (1984),
  pp.~809--822.

\bibitem{dudley67}
{\sc R.~M. Dudley}, {\em {The sizes of compact subsets of Hilbert space and
  continuity of Gaussian processes}}, Journal of Functional Analysis, 1 (1967),
  pp.~290--330.

\bibitem{dtu18}
{\sc D.~D{\~u}ng, V.~Temlyakov, and T.~Ullrich}, {\em Hyperbolic cross
  approximation}, Springer, 2018.

\bibitem{dhs19}
{\sc M.~M. Dunlop, T.~Helin, and A.~M. Stuart}, {\em {Hyperparameter Estimation
  in Bayesian MAP Estimation: Parameterizations and Consistency}}, arXiv
  preprint arXiv:1905.04365,  (2019).

\bibitem{fslr19}
{\sc G.-A. Fuglstad, D.~Simpson, F.~Lindgren, and H.~Rue}, {\em Constructing
  priors that penalize the complexity of {G}aussian random fields}, Journal of
  the American Statistical Association, 114 (2019), pp.~445--452.

\bibitem{jmy90}
{\sc M.~E. Johnson, L.~M. Moore, and D.~Ylvisaker}, {\em Minimax and maximin
  distance designs}, Journal of statistical planning and inference, 26 (1990),
  pp.~131--148.

\bibitem{kaipio2005statistical}
{\sc J.~P. Kaipio and E.~Somersalo}, {\em {Statistical and Computational
  Inverse Problems}}, Springer, 2005.

\bibitem{kwtos20}
{\sc T.~Karvonen, G.~Wynne, F.~Tronarp, C.~J. Oates, and S.~S{\"a}rkk{\"a}},
  {\em Maximum likelihood estimation and uncertainty quantification for
  gaussian process approximation of deterministic functions}, arXiv preprint
  arXiv:2001.10965,  (2020).

\bibitem{kennedy2001bayesian}
{\sc M.~C. Kennedy and A.~O'Hagan}, {\em Bayesian calibration of computer
  models}, Journal of the Royal Statistical Society: Series B (Statistical
  Methodology), 63 (2001), pp.~425--464.

\bibitem{ksvv16}
{\sc B.~T. Knapik, B.~T. Szab{\'o}, A.~W. Van Der~Vaart, and J.~H. van Zanten},
  {\em Bayes procedures for adaptive inference in inverse problems for the
  white noise model}, Probability Theory and Related Fields, 164 (2016),
  pp.~771--813.

\bibitem{ledoux_talagrand}
{\sc M.~Ledoux and M.~Talagrand}, {\em Probability in Banach Spaces:
  isoperimetry and processes}, Springer, 1991.

\bibitem{lst18}
{\sc H.~C. Lie, T.~Sullivan, and A.~L. Teckentrup}, {\em {Random forward models
  and log-likelihoods in Bayesian inverse problems}}, SIAM/ASA Journal on
  Uncertainty Quantification, 6 (2018), pp.~1600--1629.

\bibitem{loh05}
{\sc W.-L. Loh}, {\em {Fixed-domain asymptotics for a subclass of
  Mat{\'e}rn-type Gaussian random fields}}, The Annals of Statistics, 33
  (2005), pp.~2344--2394.

\bibitem{ll00}
{\sc W.-L. Loh and T.-K. Lam}, {\em Estimating structured correlation matrices
  in smooth {G}aussian random field models}, The Annals of Statistics, 28
  (2000), pp.~880--904.

\bibitem{lps_book}
{\sc G.~J. Lord, C.~E. Powell, and T.~Shardlow}, {\em An introduction to
  computational stochastic PDEs}, Cambridge University Press, 2014.

\bibitem{nww05}
{\sc F.~Narcowich, J.~Ward, and H.~Wendland}, {\em Sobolev bounds on functions
  with scattered zeros, with applications to radial basis function surface
  fitting}, Mathematics of Computation, 74 (2005), pp.~743--763.

\bibitem{nww06}
{\sc F.~J. Narcowich, J.~D. Ward, and H.~Wendland}, {\em Sobolev error
  estimates and a {B}ernstein inequality for scattered data interpolation via
  radial basis functions}, Constructive Approximation, 24 (2006), pp.~175--186.

\bibitem{niederreiter}
{\sc H.~Niederreiter}, {\em {Random Number Generation and quasi-Monte Carlo
  methods}}, SIAM, 1994.

\bibitem{ntw08}
{\sc F.~Nobile, R.~Tempone, and C.~G. Webster}, {\em A sparse grid stochastic
  collocation method for partial differential equations with random input
  data}, SIAM Journal on Numerical Analysis, 46 (2008), pp.~2309--2345.

\bibitem{ntw17}
{\sc F.~Nobile, R.~Tempone, and S.~Wolfers}, {\em Sparse approximation of
  multilinear problems with applications to kernel-based methods in {UQ}},
  Numerische Mathematik,  (2017), pp.~1--34.

\bibitem{nt15}
{\sc F.~Nobile and F.~Tesei}, {\em {A Multi Level Monte Carlo method with
  control variate for elliptic PDEs with log-normal coefficients}}, Stochastic
  Partial Differential Equations: Analysis and Computations, 3 (2015),
  pp.~398--444.

\bibitem{nt06}
{\sc E.~Novak and H.~Triebel}, {\em {Function spaces in Lipschitz domains and
  optimal rates of convergence for sampling}}, Constructive approximation, 23
  (2006), pp.~325--350.

\bibitem{o2006bayesian}
{\sc A.~O'Hagan}, {\em Bayesian analysis of computer code outputs: a tutorial},
  Reliability Engineering \& System Safety, 91 (2006), pp.~1290--1300.

\bibitem{daprato_zabczyk}
{\sc G.~D. Prato and J.~Zabczyk.}, {\em Stochastic Equations in Infinite
  Dimensions}, vol.~44 of Encyclopedia Math. Appl., Cambridge University Press,
  Cambridge, 1992.

\bibitem{py01}
{\sc H.~Putter and G.~A. Young}, {\em On the effect of covariance function
  estimation on the accuracy of kriging predictors}, Bernoulli, 7 (2001),
  pp.~421--438.

\bibitem{rasmussen_williams}
{\sc C.~E. Rasmussen and C.~K. Williams}, {\em Gaussian processes for machine
  learning}, MIT Press, 2006.

\bibitem{rz15}
{\sc A.~Reznikov and E.~Saff}, {\em The covering radius of randomly distributed
  points on a manifold}, International Mathematics Research Notices, 2016
  (2015), pp.~6065--6094.

\bibitem{rw17}
{\sc C.~Rieger and H.~Wendland}, {\em Sampling inequalities for sparse grids},
  Numerische Mathematik, 136 (2017), pp.~439--466.

\bibitem{sacks1989design}
{\sc J.~Sacks, W.~J. Welch, T.~J. Mitchell, and H.~P. Wynn}, {\em Design and
  analysis of computer experiments}, Statistical science,  (1989),
  pp.~409--423.

\bibitem{sandor13}
{\sc J.~S{\'a}ndor}, {\em On new refinements of kober’s and jordan’s
  trigonometric inequalities}, Notes Number Theory Discrete Math, 19 (2013),
  pp.~73--83.

\bibitem{scheuerer10}
{\sc M.~Scheuerer}, {\em Regularity of the sample paths of a general second
  order random field}, Stochastic Processes and their Applications, 120 (2010),
  pp.~1879--1897.

\bibitem{sss13}
{\sc M.~Scheuerer, R.~Schaback, and M.~Schlather}, {\em {Interpolation of
  spatial data--A stochastic or a deterministic problem?}}, European Journal of
  Applied Mathematics, 24 (2013), pp.~601--629.

\bibitem{shs01}
{\sc B.~Sch{\"o}lkopf, R.~Herbrich, and A.~J. Smola}, {\em A generalized
  representer theorem}, in Computational learning theory, Springer, 2001,
  pp.~416--426.

\bibitem{sn17}
{\sc M.~Sinsbeck and W.~Nowak}, {\em {Sequential Design of Computer Experiments
  for the Solution of Bayesian Inverse Problems}}, SIAM/ASA Journal on
  Uncertainty Quantification, 5 (2017), pp.~640--664.

\bibitem{stein88}
{\sc M.~L. Stein}, {\em Asymptotically efficient prediction of a random field
  with a misspecified covariance function}, The Annals of Statistics, 16
  (1988), pp.~55--63.

\bibitem{stein93}
\leavevmode\vrule height 2pt depth -1.6pt width 23pt, {\em A simple condition
  for asymptotic optimality of linear predictions of random fields}, Statistics
  \& Probability Letters, 17 (1993), pp.~399--404.

\bibitem{st18}
{\sc A.~Stuart and A.~Teckentrup}, {\em {Posterior consistency for Gaussian
  process approximations of Bayesian posterior distributions}}, Mathematics of
  Computation, 87 (2018), pp.~721--753.

\bibitem{stuart10}
{\sc A.~M. Stuart}, {\em Inverse problems}, vol.~19 of Acta Numerica, Cambridge
  University Press, 2010, pp.~451--559.

\bibitem{vw96}
{\sc A.~W. Vaart and J.~A. Wellner}, {\em Weak convergence and empirical
  processes: with applications to statistics}, Springer, 1996.

\bibitem{vv11}
{\sc A.~Van Der~Vaart and H.~Van~Zanten}, {\em Information rates of
  nonparametric {G}aussian process methods}, Journal of Machine Learning
  Research, 12 (2011), pp.~2095--2119.

\bibitem{wahba}
{\sc G.~Wahba}, {\em Spline models for observational data}, Siam, 1990.

\bibitem{wendland}
{\sc H.~Wendland}, {\em {Scattered Data Approximation}}, Cambridge University
  Press, 2005.

\bibitem{wbg20}
{\sc G.~Wynne, F.-X. Briol, and M.~Girolami}, {\em Convergence guarantees for
  gaussian process approximations under several observation models}, arXiv
  preprint arXiv:2001.10818,  (2020).

\bibitem{ying93}
{\sc Z.~Ying}, {\em Maximum likelihood estimation of parameters under a spatial
  sampling scheme}, The Annals of Statistics,  (1993), pp.~1567--1590.

\bibitem{zhang04}
{\sc H.~Zhang}, {\em {Inconsistent Estimation and Asymptotically Equal
  Interpolations in Model-Based Geostatistics}}, Journal of the American
  Statistical Association, 99 (2004), pp.~250--261.

\bibitem{zz05}
{\sc H.~Zhang and D.~L. Zimmerman}, {\em Towards reconciling two asymptotic
  frameworks in spatial statistics}, Biometrika, 92 (2005), pp.~921--936.

\end{thebibliography}

\end{document}